\documentclass[hidelinks,onefignum,onetabnum]{siamart250211}
\newcommand{\R}{\mathbb{R}}
\usepackage{tikz-cd}
\usepackage{tabularray}
\usetikzlibrary[math]
\usepackage{enumitem}



\usepackage{lipsum}
\usepackage{amsfonts}
\usepackage{graphicx}
\usepackage{epstopdf}
\usepackage{algorithmic}
\ifpdf
  \DeclareGraphicsExtensions{.eps,.pdf,.png,.jpg}
\else
  \DeclareGraphicsExtensions{.eps}
\fi


\newsiamremark{remark}{Remark}
\newsiamremark{hypothesis}{Hypothesis}
\crefname{hypothesis}{Hypothesis}{Hypotheses}
\newsiamthm{claim}{Claim}
\newsiamremark{fact}{Fact}
\crefname{fact}{Fact}{Facts}

\headers{Splitting Method for a Multilayered FPSI Problem}{ANDREW SCHARF, MARTINA BUKAČ, and SUNČICA ČANIĆ}

\title{Splitting Method for a Multilayered Poroelastic Solid Interacting with Stokes Flow\thanks{Submitted to the editors DATE.
\funding{The work of the first author was partially supported by the NSF through grant 
DMS-2011319.
The work of the second author was partially supported by the NSF through grants DMS-2208219 and DMS-2205695.
The work of the third author was partially supported by the NSF through grants
DMS-2408928, DMS-2247000 and DMS-2011319.}}}

\author{ ANDREW SCHARF \thanks{Department of Mathematics, University of California, Berkeley, Berkeley, CA 94720 
  (\email{scharfa@berkeley.edu, canics@berkeley.edu}).}
\and MARTINA BUKAČ \thanks{Department of Applied and Computational Mathematics and Statistics, University of Notre Dame, Notre Dame, IN 46556
  (\email{martina.bukac.1@nd.edu}).}
\and SUNČICA ČANIĆ \footnotemark[2]} 

\usepackage{amsopn}


\ifpdf
\hypersetup{
  pdftitle={Splitting Method for a Multilayered Poroelastic Solid Interacting with Stokes Flow},
  pdfauthor={ANDREW SCHARF, MARTINA BUKAČ, and SUNČICA ČANIĆ}
}
\fi




\begin{document}

\maketitle

\begin{abstract}
Multilayered poroelastic structures are found in many biological tissues such as cartilage and the cornea, and play a key role in the design of bioartificial organs and other bioengineering applications. Motivated by these applications, we study the interaction between a free fluid flow, governed by the time-dependent Stokes equations, and a multilayered poroelastic structure composed of a thick Biot layer and a thin, linear poroelastic plate located at the interface. The resulting equations are linearly coupled across the thin structure domain through physical coupling conditions. We develop a partitioned numerical scheme for this poroelastic fluid-structure interaction problem, combining the backward Euler Stokes-Biot splitting method with the fixed-strain Biot splitting approach. The first decouples the Stokes problem from the multilayered structure problem, while the second decouples the flow and mechanical subproblems within the poroelastic structures. Stability of the splitting scheme is proven under different combinations of time-step conditions and parameter constraints. The method is validated using manufactured solutions, and further applied to a biologically inspired blood vessel flow problem. We also demonstrate convergence of the solution to the limiting case without the plate as its thickness tends to zero, providing additional validation of the numerical method.
\end{abstract}

\begin{keywords}
poroelasticity, fluid-poroelastic structure interaction, poroelastic plate, multilayered poroelasticity, partitioned numerical methods, ﬁnite element method
\end{keywords}

\begin{MSCcodes}
74F10, 76S05, 74L15, 34A01, 74S05, 76M10, 65M60, 65M12, 65M22, 74H15
\end{MSCcodes}

\section{Introduction}
Poroelastic materials are deformable porous media that exhibit both fluid-dynamic and elastodynamic properties. The classic continuum mechanical model introduced by Biot \cite{Biot1} was originally motivated by problems in soil consolidation, and has since seen use in a wide array of geomechanics applications, see e.g. \cite{geoApp2,geoApp3,geoApp4,geoApp1}. Applications of poroelastic models have subsequently been found in a variety of fields; in particular, poroelasticity is frequently encountered in biological tissues such as cartilage, liver, cornea, arterial endothelium, and intervertebral disc \cite{bioApp2,bioApp3,bioApp1}. These structures are often multilayered and in contact with a free-flowing fluid, as in the case of blood passing through an arterial vessel. Similar multiphysics systems can occur in bioengineering designs, for example, that of an implantable bioartificial pancreas (iBAP) \cite{iBAP} consisting of a poroelastic islet cell scaffold encapsulated by a semipermeable silicon nanopore plate. This membrane acts as a selective barrier to isolate the transplanted cells from the host immune system, eliminating the need for immunosuppressive drugs. Developing efficient and accurate numerical schemes for simulating fluids interacting with multilayered poroelastic structures is therefore an important step in the computational analysis of biomedical devices and an area of active research (see \cite{iBAPNum2,iBAPNum1}).

To that end, in this paper we study a fluid-poroelastic structure interaction (FPSI) problem where the fluid is an incompressible viscous Newtonian fluid described by the linear Stokes equations and the structure consists of two Biot layers: a thick poroelastic material and a thin poroelastic plate with transverse flow. Each Biot system comprises two coupled equations: one governing the elastodynamics of the poroelastic medium, and the other expressing fluid mass conservation in terms of pore pressure. The fluid velocity can then be recovered using Darcy’s law. 
The fluid and thick Biot domains are linearly coupled across the poroelastic plate—which serves as an interface bearing mass and poroelastic energy—through a set of kinematic and dynamic coupling conditions.  The poroelastic plate model considered here was rigorously justified in \cite{BiotPlate}, and the FPSI problem studied in this manuscript was shown to have weak solutions that are unique under additional regularity assumptions in \cite{BociuMulti}. While other multilayered fluid-structure interaction (FSI) problems have previously been analyzed, e.g. in \cite{multiFSI2,MUHA2014,multiFSI1}, the use of a poroelastic rather than elastic membrane/plate interface presents unique challenges inherent to the Biot plate equations: the elastodynamics equation is a 4th-order equation capturing transverse (vertical) plate displacement, the fluid conservation of mass equation describes flow in only transverse direction, and the two equations are defined on domains of different dimension.

One well-studied linear FPSI model is the coupled Stokes-Biot (SB) system, which combines elements of FSI between a fluid and a deformable elastic body with elements of Stokes-Darcy (SD) coupling between a fluid and a rigid porous material. Numerical schemes for SB coupling (and coupling in general) can be monolithic, where the Stokes and Biot equations are solved simultaneously, or partitioned, where the subproblems are decoupled and solved independently from each other. Monolithic SB solvers have been developed and used for hemodynamics applications as in \cite{MONO4,MONO3,MONO2,MONO5,MONO6,MONO1}. These schemes have the advantage of treating coupling conditions implicitly; however, since all variables are solved together, they often result in large systems that rely on preconditioning or other modifications in order to achieve stability. In contrast, partitioned methods benefit from modularity (i.e. using existing methods and code for Stokes and Biot subproblems) and better conditioning due to the resulting smaller systems in each subproblem. On the other hand, stability for splitting schemes is highly sensitive to how the coupling terms are handled numerically at the interface. Strongly-coupled partitioned methods, which perform subiterations of solving the subproblems to converge to a solution at each time-step, achieve better enforcement of coupling conditions at the expense of more computation. Loosely-coupled methods, which solve each subproblem only once in a single time-step, are computationally less expensive but may suffer reduced stability or accuracy compared to strongly-coupled alternatives. A number of partitioned schemes for FPSI have been developed, such as in \cite{Mortar,NitCoup,MultiLayer,FPSI2,RobRob,MACscheme}, and frequently borrow tools and splitting methods used for SD and FSI problems. In \cite{MultiLayer}, a loosely-coupled splitting method is used to solve a fluid multilayered-structure interaction problem where the structure consists of a thin {\emph{elastic}} layer and a thick poroelastic layer. While the structure model in that paper was purely elastic, it was assumed that the membrane is permeable to fluids. Building on the kinematically coupled scheme -- an unconditionally stable, loosely coupled method that has been successfully applied to various FSI problems \cite{KinCoup} -- the authors in \cite{MultiLayer} demonstrate that the scheme in \cite{MultiLayer} achieves conditional stability under a CFL condition. Since fluid multilayered-structure interaction involves an even greater number of variables than the standard SB FPSI problem, loosely-coupled partitioned methods potentially offer even greater advantages in reducing numerical costs while requiring a more complex consideration of how to effectively treat coupling conditions at the interface.

The poroelastic plate in our model introduces additional numerical challenges due to the unidirectional fluid flow—restricted to the transverse (normal) direction—and the presence of a fourth-order spatial derivative. Additionally, while the plate elastodynamics subproblem occurs on the codimension 1 interface -- middle surface of the plate -- over which the Stokes and Biot domains are coupled, the hydrodynamics subproblem is defined on the codimension 0 domain of small but finite thickness $H$. Previous works, see \cite{PoroPlateNum1,PoroPlateNum2},  have developed numerical schemes that circumvent this mismatch in domains by averaging the fluid subproblem over the transverse dimension, resulting in a system of equations on the middle surface of the plate. This strategy is less naturally suited for a multiphysics problem where the plate is coupled to other fluid domains, since continuity of pressures and normal velocities are coupling conditions that are imposed in a trace sense and not in an average sense. To our knowledge, prior to this work no numerical scheme has been developed that solves the Biot poroelastic plate equations coupled to any other model. 

In this paper, we introduce and analyze a novel loosely-coupled splitting scheme for solving the Stokes--Biot plate--Biot coupled problem described in \cref{Model,Energy}. Our strategy outlined in \cref{Scheme} combines a Stokes-Biot splitting of the free flowing fluid in the Stokes domain from the multilayered poroelastic structure, followed by an additional Biot-splitting method to decouple the hydrodynamic and elastodynamic subproblems in the Biot domains. In particular, we make use of fixed-strain Biot splitting \cite{BiotSplit,FixedStrain}, which gives rise to a loosely-coupled partitioned scheme that, for the thick Biot domain, is conditionally stable under a time-step condition and unconditionally stable under a constraint on the problem parameters. We extend this method to the Biot plate equations with the ``anisotropic'' flow restricted to only transverse direction, and show analagous stability properties as in the full Biot case with unrestricted flow. The three resulting subproblems consist of a Darcy solve, an elastic solid solve, and a Stokes solve, each of which are simple to implement and can readily make use of already existing code. Discretization in space is carried out via the finite element method, where mixed finite elements are used for the Stokes subproblem and the elastic plate subproblem. We analyze the stability of this scheme in \cref{Stability} and show that we have conditional stability with constraints mirroring those arising from the individual Biot and SB splittings, which under certain parameter constraints is a CFL condition. We then demonstrate in \cref{Simulations} the efficacy of the method in a number of simulations. Using the method of manufactured solutions, we show that the scheme exhibits first-order convergence in time and second-order convergence in space. 
We also validate the solver by demonstrating that solutions to the Stokes–Biot plate–Biot problem, with plate thickness $H$, converge as $H\to0$ to the solution of the corresponding Stokes–Biot problem obtained via a separate, monolithic approach. Convergence plots at the interface indicate that the presence of a thin plate carrying mass and poroelastic energy acts to “regularize” the solution—smoothing or damping the coupled FPSI dynamics.
Finally, we apply the scheme to a biologically motivated hemodynamics FPSI problem, and show that the method performs robustly under physiologically relevant parameter regimes.

\section{Mathematical Model} \label{Model}

\subsection{Spatial Domain $\Omega$} \label{Spatial}

We study the interaction of Stokes flow of an incompressible, Newtonian fluid with a multilayered poroelastic structure in three physical space dimensions. The structure consists of a thick layer $\Omega_b$ and a thin plate $\Omega_p$ with mass, both of which are porous and deformable. The plate acts as the interface between $\Omega_b$ and the fluid domain $\Omega_f$. We imagine the entire domain $\Omega$ to be a rectangular prism $I_1\times I_2\times I_3 \subset \R^3$ where $0$ is the midpoint of the interval $I_3$; thus the intersection of the plane $z = 0$ with $\Omega$, represented by the 2D rectangle $\Gamma$ in the $xy$-plane, separates the domain into an upper (thick) structure domain $\Omega_b := \Omega\cap\{z \geq 0\}$ and a lower fluid domain $\Omega_f := \Omega\cap\{z \leq 0\}$ (see \cref{fig:PhysDomain}). The thin plate domain, which we specify to have a (small) thickness $H$, is given by $\Omega_p := \Gamma\times\left[-\frac{H}{2},\frac{H}{2}\right]$ and has upper and lower boundaries $\Gamma_\pm := \Gamma\times\left\{\pm\frac{H}{2}\right\}$. With a slight abuse of notation we write $\Gamma$ for the middle surface $\Gamma\times\{0\}$ of the plate bounding $\Omega_f$ and $\Omega_b$. We write $\boldsymbol n^p = (0,0,1)$ for the unit normal vector of the membrane so that $\nabla_{\boldsymbol n^p} = \frac{\partial}{\partial z}$ is the normal derivative to $\Gamma$, and similarly write $\boldsymbol \tau^p_i, i = 1, 2$ to denote directions tangent to the membrane so that $\nabla_{\boldsymbol \tau^p} = \left(\frac{\partial}{\partial x},\frac{\partial}{\partial y}\right)$ is the 2D tangential gradient operator.

\tikzmath{\XD = 6; \YD =.8; \ZD = 3;\PD=.2;}
\tikzmath{\RR = 1.5; \LL =1.5; \HH = .21;}  
\usetikzlibrary{shapes.geometric}
\usetikzlibrary{calc}
\usetikzlibrary{arrows.meta,bending,positioning}
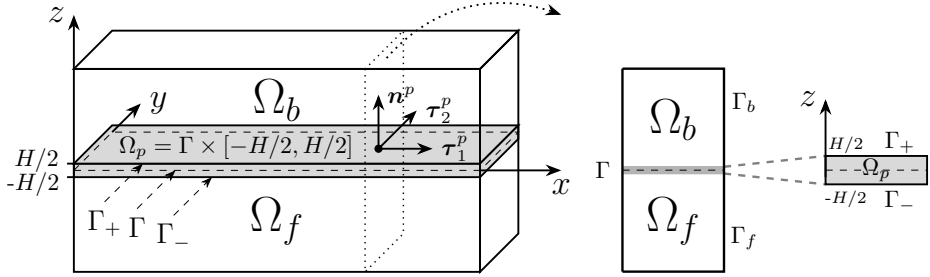
\begin{figure}[h]
\centering
\scalebox{.9}{\begin{tikzpicture}
\draw[fill=gray!30, thick] (0,-\PD/2) rectangle (\XD,\PD/2);
\draw[fill=gray!30, thick] (0, \PD/2) -- (\XD, \PD/2) -- ($(\XD, \PD/2) + (45:\YD)$) -- ($(0, \PD/2) + (45:\YD)$) -- cycle node[shift={(4*\XD/10,3*\YD/10)}] {$\Omega_p = \Gamma\times[-H/2,H/2]$} ;
\draw[fill=gray!30, thick] (\XD, \PD/2) -- (\XD, -\PD/2) -- ($(\XD, -\PD/2) + (45:\YD)$) -- ($(\XD, \PD/2) + (45:\YD)$) -- cycle;
\draw[line width=.6,dotted] (.717*\XD, \ZD/2) -- (.717*\XD, -\ZD/2) -- ($(.717*\XD, -\ZD/2) + (45:\YD)$) -- ($(.717*\XD, \ZD/2) + (45:\YD)$) -- cycle;

\draw[dashed] (0, 0) -- (\XD, 0) -- ($(\XD, 0) + (45:\YD)$) -- ($(0, 0) + (45:\YD)$) -- cycle;

\draw[dashed,-Stealth]  ($(.34*\XD,-\PD/2) - (45:\YD)$) node[below] {\large $\Gamma_-$} -- (.34*\XD,-\PD/2) ;
\draw[dashed,-Stealth]  ($(\XD/4,0) - (45:\YD)$) node[below] {\large $\Gamma$} -- (\XD/4,0) ;
\draw[dashed,-Stealth]  ($(.17*\XD,\PD/2) - (45:\YD)$) node[below] {\large $\Gamma_+$} -- (.17*\XD,\PD/2) ;

\draw[fill=black,fill opacity=1] (.75*\XD,4*\YD/10) circle (1.5pt);
\draw[thick,-Stealth]  (.75*\XD,4*\YD/10) -- (.75*\XD,4*\YD/10+\YD) node[right] { $\boldsymbol n^p$};
\draw[thick,-Stealth]  (.75*\XD,4*\YD/10) -- (.75*\XD + \YD,4*\YD/10) node[right] { $\boldsymbol{\tau}^p_1$};
\draw[thick,-Stealth]  (.75*\XD, 4*\YD/10) -- ($(.75*\XD, 4*\YD/10) + (45:\YD)$) node[right] {$\boldsymbol\tau^p_2$};

\draw[thick,-Stealth]  (0,\ZD/2) -- (0,\ZD/2+\YD) node[left] {\Large $z$};
\draw[thick,-Stealth]  (\XD,0) -- (\XD + 3*\YD/2,0) node[below] {\Large $x$};
\draw[thick,-Stealth]  ($(0, 0) + (45:\YD)$) -- ($(0, 0) + (45:7*\YD/4)$) node[right] {\Large $y$};
\draw[thick] (-.1,\PD/2) node[shift={(-.45,.06)}]  {\color{black} $H/2$} -- (0,\PD/2) ;
\draw[thick] (-.1,-\PD/2)  node[shift={(-.5,-.1)}]  {\color{black} -$H/2$} -- (0,-\PD/2) ;

\draw[thick] (0,-\ZD/2) rectangle (\XD,\ZD/2) node[shift={(-\XD/2,-\ZD/6.5)}] {\huge\color{black}$\Omega_b$} node[shift={(-\XD/2,-3*\ZD/4)}] {\huge\color{black}$\Omega_f$};
\draw[thick] (0, \ZD/2) -- (\XD, \ZD/2) -- ($(\XD, \ZD/2) + (45:\YD)$) -- ($(0, \ZD/2) + (45:\YD)$) -- cycle;
\draw[thick] (\XD, \ZD/2) -- (\XD, -\ZD/2) -- ($(\XD, -\ZD/2) + (45:\YD)$) -- ($(\XD, \ZD/2) + (45:\YD)$) -- cycle;

\draw[thick,-Stealth,dotted] (.77*\XD,\ZD/2 + \YD/5) to[bend left=35] (\XD + 3*\YD/2,\ZD/2+\YD);
\end{tikzpicture}}
\scalebox{.9}{\begin{tikzpicture}[text opacity=1,line width= 1pt]
\draw (0,-\RR) -- (0,0);
\draw (0,0) -- (0,\RR);
\draw (0,\RR) -- (\LL,\RR) node[shift={(.3,-\RR/3)}] { $\Gamma_b$} -- (\LL,-\RR) node[shift={(.3,\RR/3)}] { $\Gamma_f$} -- (0,-\RR);
\node at (\LL/2,-\RR/2) {\huge $\Omega_f$};
\node at (\LL/2,\RR/2) {\huge $\Omega_b$};
\draw[opacity=.3,line width=3.5pt] (0,0) node[left] { $\Gamma$} -- (\LL,0);

\draw[fill=gray!30, thick] (3*\LL,-\HH) node[shift={(-.4,-.25)}] { $\Gamma_-$} -- (3*\LL,\HH) node[shift={(-.4,.2)}] { $\Gamma_+$} -- (2*\LL,\HH) node[shift={(.3,.17)}] { \scriptsize$H/2$} -- (2*\LL,-\HH) node[shift={(.3,-.22)}] { \scriptsize -$H/2$} -- cycle;

\draw[dashed,line width=.3] (2*\LL, 0) -- (3*\LL, 0);
\draw[dashed,line width=.3] (0, 0) -- (\LL, 0);
\node at (2.5*\LL,0) { $\Omega_p$};
\draw[line width=1pt,dashed,opacity = .5] (\LL,.05)  -- (2*\LL,\HH) ;
\draw[line width=1pt,dashed, opacity = .5] (\LL,-.05)  -- (2*\LL,-\HH);
\draw[thick,-Stealth]  (2*\LL,-\HH/2) -- (2*\LL,7*\RR/10) node[left] {\Large $z$};
\end{tikzpicture}}

\caption{3D domain $\Omega$ consisting of a poroelastic medium $\Omega_b$ and a fluid domain $\Omega_f$ coupled across a poroelastic plate $\Omega_p$ (left), 2D vertical cross-section with inflated poroelastic plate $\Omega_p$ (right)}
\label{fig:PhysDomain}
\end{figure}

For any function $f$ defined on $\Gamma$, we can define its {\bf{extension}} $\tilde f$ on $\Omega_p$ by taking 
\begin{equation}\label{extension}
\left.\tilde f\right|_\Gamma = f\ \text{with}\  \nabla_{\boldsymbol n^p}\tilde f = 0 \ \text{on} \ \Omega_p; \ \text{i.e.} \ \tilde f(x,y,z) = f(x,y).
\end{equation}
Conversely, for any function $f$ on $\Omega_p$, we can consider the {\bf{averaged function}} $\overline{f} = \frac{1}{H}\int_{-H/2}^{H/2}f \ dz$ defined on $\Gamma$. The boundaries of $\Omega_b$ and $\Omega_f$ external to $\Omega$ are denoted $\Gamma_b$ and $ \Gamma_f$, respectively, so that $\partial\Omega_b = \Gamma_b\cup\Gamma$ and $\partial\Omega_f = \Gamma_f\cup\Gamma$. The $L^2$ inner product on a domain $\mathcal O$ will be written as $\left(\cdot,\cdot\right)_\mathcal O$ with corresponding norm $||\cdot||_\mathcal O$.

\subsection{Biot Model in $\Omega_b$} \label{Biot}

In $\Omega_b$, the unknowns are the displacement $\boldsymbol \eta$, the fluid pore pressure $p$, and the relative filtration velocity $\boldsymbol u_b$, which satisfy the Biot equations \cite{Biot1} derived from the balance of linear momentum, conservation of mass, and Darcy's law, respectively:
\begin{subequations}\label{eq:thick}\begin{align} & \rho_{b} \partial_{t}^2 \boldsymbol{\eta}-\nabla \cdot \boldsymbol{\sigma}_{b}(\boldsymbol \eta,p) + \gamma \boldsymbol\eta= \boldsymbol{F}_b,   \label{eq:thick1} \\
& \partial_{t}(c_{0}  p+\alpha \nabla \cdot  \boldsymbol{\eta}) + \nabla \cdot \boldsymbol u_b = G_b, \label{eq:thick2} \\
& \boldsymbol\kappa^{-1}\boldsymbol{u}_b = -\nabla p. \label{eq:thick3}\end{align}\end{subequations}
Here $\boldsymbol \sigma_b(\boldsymbol \eta,p)$ and $\boldsymbol \kappa$ are the poroelastic stress and permeability tensors, $\boldsymbol F_b$ and $G_b$ are forcing terms, $\rho_b$ is the structure density, $c_0$ is the storage coefficient, and $\alpha$ is the Biot-Willis parameter representing the strength of fluid-solid coupling. In the 3D case, the coefficient $\gamma$ is zero; however, in the 2D axially symmetric formulation (e.g. found in blood vessel models), $\gamma > 0$ acts as a spring term in order to couple the top and bottom stucture displacements (see for example \cite{MultiLayer,SprCoef1,SprCoef2,SprCoef3}). We further specify $\boldsymbol \sigma_b(\boldsymbol\eta,p) = \boldsymbol\sigma_e(\boldsymbol\eta) - \alpha p\boldsymbol I$ where $\boldsymbol\sigma_e(\boldsymbol\eta) = 2\mu_b\boldsymbol D(\boldsymbol\eta) + \lambda_b (\nabla\cdot\boldsymbol\eta)\boldsymbol I $ is the elastic stress tensor for a Saint Venant-Kichhoff material with Lam\'{e} parameters $\mu_b, \lambda_b$ and $\boldsymbol D(\boldsymbol\eta) = \frac{1}{2}(\nabla\boldsymbol\eta + (\nabla\boldsymbol\eta)^T)$ represents the symmetrized gradient. In addition, we assume $\boldsymbol \kappa$ is a positive definite tensor with minimum eigenvalue $k_{min}$. Letting $\boldsymbol \xi = \partial_t\boldsymbol\eta$ denote the structure velocity of the system, the full set of first-order in time equations in $\Omega_b$ is given by
\begin{align*}
\left\{\begin{array}{l}
\partial_{t} \boldsymbol{\eta} = \boldsymbol{\xi}, \\
\rho_{b} \partial_{t} \boldsymbol{\xi}-2 \mu_{b}\nabla \cdot  \boldsymbol{D}(\boldsymbol{\eta})-\lambda_{b}\nabla(\nabla \cdot \boldsymbol{\eta}) + \alpha \nabla p +  \gamma \boldsymbol\eta= \boldsymbol{F}_b, \\
c_{0} \partial_{t} p+\alpha \nabla \cdot  \boldsymbol{\xi} + \nabla \cdot \boldsymbol u_b = G_b, \\
\boldsymbol\kappa^{-1}\boldsymbol{u}_b = -\nabla p.
\end{array}\right. 
\end{align*}

\subsection{Poroelastic Plate Model in $\Omega_p$} \label{Plate}
 For a poroelastic plate it was shown in \cite{BiotPlate} that the dominant flow through the plate is in transverse direction. The unknowns are the transverse displacement of the plate $w$ defined on the middle surface $\Gamma$, the fluid pore pressure $q$, and the transverse filtration velocity $u_p$. These satisfy the Biot poroelastic plate equations (derived by Biot \cite{Biot2,Biot3} and rigorously justified by Marciniak-Czochra and Mikeli\'{c} \cite{BiotPlate}) given by
\begin{subequations}\label{eq:thin}\begin{align} 
    &H\rho_{p} \partial_{t}^2  w+H^3D \Delta_{\boldsymbol{\tau}^{p}}^2  w +H\gamma^p w+H \alpha^{p} \Delta_{\boldsymbol{\tau}^{p}} \overline{z q}=F_{p}   &&\text{ in } \Gamma\times(0,T), \label{eq:thin1}\\
&\partial_{t}(c_{0}^{p}  q-\alpha^{p} z \Delta_{\boldsymbol{\tau}^{p}}\tilde w)+\nabla_{\boldsymbol{n}^{p}}u_p = G_p &&\text{ in } \Omega_p\times(0,T), \label{eq:thin2}\\
& (\kappa^p)^{-1}u_p = -\nabla_{\boldsymbol{n}^{p}} q &&\text{ in } \Omega_p\times(0,T) . \label{eq:thin3}
\end{align}\end{subequations}
Here $\kappa^p > 0$ is a permeability coefficient, $F_p$ and $G_p$ are forcing terms, $\rho_p$ is the plate area density, $c_0^p$ is the storage coefficient, $\alpha^p$ is the Biot-Willis parameter representing the strength of fluid-solid coupling, $D$ is the bending stiffness of the plate, and $\gamma^p$ is a 2D spring coefficient that plays a similar role to that of $\gamma$ in the thick domain. As in the Biot model, we set $v = \partial_t w$ and $\Lambda = -\Delta_{\boldsymbol{\tau}^{p}} w$ to obtain the reduced-order system
\begin{align*}
\left\{\begin{array}{ll}
\partial_t w = v & \text{ in } \Gamma\times(0,T), \\
H\rho_{p} \partial_{t} v-H^3D \Delta_{\boldsymbol{\tau}^{p}} \Lambda+H\gamma^p w+H \alpha^{p} \Delta_{\boldsymbol{\tau}^{p}} \overline{z q}=F_{p} & \text{ in } \Gamma\times(0,T), \\
\partial_t\Lambda = -\Delta_{\boldsymbol{\tau}^{p}} v & \text{ in } \Gamma\times(0,T), \\
c_{0}^{p} \partial_{t} q-\alpha^{p} z \Delta_{\boldsymbol{\tau}^{p}}\tilde v+\nabla_{\boldsymbol{n}^{p}}u_p = G_p & \text{ in } \Omega_p\times(0,T), \\
(\kappa^p)^{-1}u_p = -\nabla_{\boldsymbol{n}^{p}} q & \text{ in } \Omega_p\times(0,T).
\end{array}\right.  
\end{align*}

\subsection{Stokes Model in $\Omega_f$} \label{Stokes}

In the fluid domain $\Omega_f$, the fluid velocity $\boldsymbol u$ and pressure $\pi$ satisfy the time-dependent incompressible Stokes equations
\begin{subequations}\label{eq:fluid}\begin{align} 
&  \rho_{f} \partial_{t} \boldsymbol{u}-\nabla \cdot \boldsymbol{\sigma}_{f}(\boldsymbol u, \pi) = \boldsymbol F_f, \label{eq:fluid1} \\
&  \nabla\cdot\boldsymbol u = 0,\label{eq:fluid2}
\end{align}\end{subequations}
with fluid stress tensor $\boldsymbol\sigma_f(\boldsymbol u, p_f) = -\pi\boldsymbol I + 2\mu_f\boldsymbol D(\boldsymbol u)$, density $\rho_f$, and viscosity $\mu_f$.

\subsection{Initial, Boundary, and Coupling Conditions} \label{Conditions}
For initial conditions, we take $\boldsymbol\eta_0, \boldsymbol\xi_0,p_0,w_0,v_0,q_0,\boldsymbol u_0$ to be the values at time $t = 0$.
On the domain boundary $\partial\Omega$, for ease of analysis we impose:
\begin{enumerate}[label=\textbf{(B\arabic*)}]
\item the clamped and drained boundary conditions $\boldsymbol\eta \equiv p \equiv 0$ on $\Gamma_b$;
\item the clamped boundary condition $w \equiv\frac{\partial }{\partial \boldsymbol\nu}w\equiv 0$ on $\partial\Gamma$ where $\frac{\partial }{\partial \boldsymbol\nu}$ is the normal derivative on $\partial\Gamma$;
\item the no-slip boundary condition $\boldsymbol u \equiv 0$ on $\Gamma_f$.
\end{enumerate}
Alternative boundary conditions are explored in \cref{Simulations} for numerical simulations. The problems are coupled along the interfaces $\Gamma, \Gamma_-, \Gamma_+$ by the following kinematic and dynamic coupling conditions:
\begin{enumerate}[label=\textbf{(BJS)}]
\item Beavers-Joseph-Saffman slip condition $\left.\boldsymbol\sigma_f(\boldsymbol u,\pi)\boldsymbol n^p\right|_\Gamma \cdot \boldsymbol\tau^p_i = -\left.\beta\boldsymbol u\right|_\Gamma\cdot \boldsymbol\tau^p_i$;
\end{enumerate}
\begin{enumerate}[label=\textbf{(K\arabic*)}]
\item continuity of normal components of velocity $\left.\boldsymbol u\right|_{\Gamma} \cdot\boldsymbol n^p - v= \left.u_p\right|_{\Gamma_-}$ and \\
$\left.\boldsymbol u_b\right|_{\Gamma} \cdot\boldsymbol n^p= \left.u_p\right|_{\Gamma_+}$;
\item continuity of displacement $ \left.\boldsymbol\eta\right|_\Gamma =  w\boldsymbol n^p$;
\end{enumerate}
\begin{enumerate}[label=\textbf{(D\arabic*)}]
\item balance of forces $F^p = \left.\boldsymbol\sigma_p(\boldsymbol \eta,p)\boldsymbol n^p\right|_\Gamma \cdot \boldsymbol n^p - \left.\boldsymbol\sigma_f(\boldsymbol u,\pi)\boldsymbol n^p\right|_\Gamma \cdot \boldsymbol n^p$;
\item and the continuity of pressure $ \left.q\right|_{\Gamma_-} = -\left.\boldsymbol\sigma_f(\boldsymbol u,\pi)\boldsymbol n^p\right|_\Gamma \cdot \boldsymbol n^p$ and $ \left.q\right|_{\Gamma_+} =  \left.p\right|_\Gamma $.
\end{enumerate}

The full set of coupling conditions is thus
\begin{align}\label{CC}
\left\{\begin{array}{lr}
 \left.q\right|_{\Gamma_+} =  \left.p\right|_\Gamma & \text{ in } \Gamma_+\times(0,T), \\
\left.\boldsymbol u_b\right|_{\Gamma} \cdot\boldsymbol n^p= \left.u_p\right|_{\Gamma_+} & \text{ in } \Gamma_+\times(0,T), \\
\left.\boldsymbol\eta\right|_\Gamma =  w \boldsymbol n^p & \text{ in } \Gamma\times(0,T), \\
F^p = \left.\boldsymbol\sigma_b(\boldsymbol \eta,p)\boldsymbol n^p\right|_\Gamma \cdot \boldsymbol n^p - \left.\boldsymbol\sigma_f(\boldsymbol u,\pi)\boldsymbol n^p\right|_\Gamma \cdot \boldsymbol n^p & \text{ in } \Gamma\times(0,T), \\
\left.\boldsymbol\sigma_f(\boldsymbol u,\pi)\boldsymbol n^p\right|_\Gamma \cdot \boldsymbol\tau^p_i = -\left.\beta\boldsymbol u\right|_\Gamma\cdot \boldsymbol\tau^p_i & \text{ in } \Gamma\times(0,T), \\
\left.q\right|_{\Gamma_-} = -\left.\boldsymbol\sigma_f(\boldsymbol u,\pi)\boldsymbol n^p\right|_\Gamma \cdot \boldsymbol n^p & \text{ in } \Gamma_-\times(0,T), \\
\left.u_p\right|_{\Gamma_-} = \left.\boldsymbol u\right|_{\Gamma} \cdot\boldsymbol n^p - v & \text{ in } \Gamma_-\times(0,T). \\
\end{array}\right.  
\end{align}

\section{Energy, Function Spaces, Weak Formulation} \label{Energy}

For the following we define the solution vectors $\boldsymbol A = (\boldsymbol\xi,v,\Lambda,p,\boldsymbol u_b,q,u_p,\boldsymbol u, \pi)$ and $\boldsymbol{\hat{A}} = \boldsymbol A \times (\boldsymbol\eta,w)$. We also consider a vector of test functions $\boldsymbol B = (\boldsymbol \phi, \varphi, \psi, r, \boldsymbol U_b, s, U_p, \boldsymbol U, \Pi)$.

By testing equations (1-3) with the solution variables and using the boundary and coupling conditions, we obtain the formal energy equality 
\begin{equation*}  \frac{1}{2}\dot{ \mathcal E}(\boldsymbol{\hat{A}}) + \mathcal D(\boldsymbol{\hat{A}}) = \mathcal F(\boldsymbol{\hat{A}}) \end{equation*} 
where
\begin{equation*} \begin{split}  \mathcal E(\boldsymbol{\hat{A}}) 
 =& \rho_b||\boldsymbol\xi||^2_{\Omega_b} + \lambda_b||\nabla\cdot\boldsymbol\eta||^2_{\Omega_b} + 2\mu_b||\boldsymbol D(\boldsymbol \eta)||^2_{\Omega_b} +  \gamma||\boldsymbol\eta||^2_{\Omega_b}  + H\rho_p||v||^2_{\Gamma} \\
& + H^3D||\Lambda||^2_{\Gamma} + H\gamma^p||w||^2_{\Gamma} +  c_0||p||^2_{\Omega_b} + c_0^p||q||^2_{\Omega_p} + \rho_f||\boldsymbol u||^2_{\Omega_f}, \\
\mathcal D(\boldsymbol{\hat{A}})  = & ||\boldsymbol\kappa^{-1/2}\boldsymbol u_b||^2_{\Omega_b} + ||(\kappa^p)^{-1/2} u_p||^2_{\Omega_p} + 2\mu_f||\boldsymbol D (\boldsymbol u)||^2_{\Omega_f} + \beta\left|\left|\boldsymbol P_{\boldsymbol\tau^p}\boldsymbol{u}\right|\right|^2_{\Gamma},  \\
\mathcal F(\boldsymbol{\hat{A}})  = &  (\boldsymbol F_b,\boldsymbol\xi)_{\Omega_b}  +  (G_b,p)_{\Omega_b} +  (G_p,q)_{\Omega_p} + (\boldsymbol F_f,\boldsymbol u)_{\Omega_f},
\end{split}  \end{equation*}
and $\boldsymbol P_{\boldsymbol\tau^p}\boldsymbol{u} = \sum_{i=1}^2\left(\left.\boldsymbol u\right|_\Gamma\cdot \boldsymbol\tau^p_i\right)\boldsymbol\tau^p_i$ denotes the projection of $\boldsymbol u$ onto the tangent space of $\Gamma$. A full derivation can be found in \cite{BociuMulti}.

For any function space $X$ on a domain $\mathcal O\subset\Omega$, let $X_{0} = \left\{f \in X : \left.f\right|_{\partial\Omega} \equiv 0\right\}$ be the subset of functions vanishing on $\partial\Omega$. Based on the formal energy estimate, we define the spaces
\begin{equation*} \begin{split} 
Q^s & = \left\{(p, \boldsymbol u_b, q, u_p) \in H^1_{0}(\Omega_b)\times \boldsymbol L^2(\Omega_b)\times H^1(\Omega_p)\times L^2(\Omega_p) : \left.q\right|_{\Gamma_{+}}=\left.p\right|_{\Gamma} \right\}, \\
 \mathcal V^b & = \left\{(\boldsymbol \xi,v) \in \boldsymbol H^1_{0}(\Omega_b)\times  H^1_0(\Gamma):\left.\boldsymbol\xi\right|_\Gamma =  v\boldsymbol n^p \right\},  \\
\mathcal V^s & = \mathcal V^b\times H^1(\Gamma), \\
 \mathcal V^f &=\boldsymbol H^1_{0}(\Omega_f)\times L^2(\Omega_f), \\
 \mathcal V & = Q^s \times \mathcal V^s\times \mathcal V^f.
\end{split} \end{equation*}

\begin{remark}
From the energy estimate, we only have estimates for $\nabla_{\boldsymbol n^p}q$ and not $\nabla_{\boldsymbol \tau^p}q$, so $q$ actually belongs to $H^{0,0,1}(\Omega_p) = \left\{q \in L^2(\Omega_p) : \nabla_{\boldsymbol n^p}q \in  L^2(\Omega_p)\right\}$. To avoid using anisotropic finite element spaces, which require specialized constructions, for the numerical scheme we work with $q \in H^{1}(\Omega_p)$ instead.
\end{remark}

The primal mixed weak formulation of the monolithic Stokes--Biot plate--Biot coupled problem is as follows: given $t \in [0,T]$, find solutions $\boldsymbol A \in \mathcal V, (\boldsymbol\eta,w) \in \mathcal V_b$ with $\partial_t\boldsymbol\eta = \boldsymbol \xi, \partial_t w = v$ so that for every $\boldsymbol B \in \mathcal V$:
\begin{equation*} \begin{split} 
&  \rho_b(\partial_t \boldsymbol\xi,\boldsymbol\phi)_{\Omega_b}  + \lambda_b(\nabla\cdot\boldsymbol\eta,\nabla\cdot\boldsymbol\phi)_{\Omega_b} + 2\mu_b(\boldsymbol D(\boldsymbol\eta),\boldsymbol D(\boldsymbol\phi))_{\Omega_b} - \alpha (p,\nabla\cdot\boldsymbol\phi)_{\Omega_b} + \gamma (\boldsymbol\eta,\boldsymbol\phi)_{\Omega_b} \\
&  + H\rho_p(\partial_t v,\varphi)_{\Gamma} + H^3D(\nabla_{\boldsymbol\tau^p}\Lambda,\nabla_{\boldsymbol\tau^p}\varphi )_\Gamma + H^3D(\partial_t\Lambda,\psi)_\Gamma - H^3D(\nabla_{\boldsymbol\tau^p}v,\nabla_{\boldsymbol\tau^p}\psi )_\Gamma   \\
& + H\gamma^p(w,\varphi)_{\Gamma} - H\alpha^p\left( \nabla_{\boldsymbol\tau^p}\overline{zq},\nabla_{\boldsymbol\tau^p}\varphi \right)_\Gamma   -   \left(q,\varphi  \right)_{\Gamma_-}   + c_0 (\partial_t p,r)_{\Omega_b} + \alpha(\nabla\cdot\boldsymbol\xi,r)_{\Omega_b}  \\
& - ( \boldsymbol u_b,\nabla r)_{\Omega_b} +  ( \nabla p,\boldsymbol U_b)_{\Omega_b}  +  ( \kappa^{-1}\boldsymbol u_b,\boldsymbol U_b)_{\Omega_b} + c_0^p (\partial_t q,s)_{\Omega_p} + \alpha^p(\nabla_{\boldsymbol\tau^p}\tilde v,\nabla_{\boldsymbol\tau^p}(zs))_{\Omega_p}   \\
&- (  u_p,\nabla_{\boldsymbol n^p} s)_{\Omega_p} +  ( \nabla_{\boldsymbol n^p} q, U_p)_{\Omega_p}  +  ( (\kappa^p)^{-1} u_p, U_p)_{\Omega_p} - \left(\left.\boldsymbol{u}\right|_{\Gamma} \cdot \boldsymbol{n}^{p}-v,s  \right)_{\Gamma_-}\\
&  +  \rho_f(\partial_t\boldsymbol u,\boldsymbol U)_{\Omega_f}   + 2\mu_f(\boldsymbol D(\boldsymbol u),\boldsymbol D(\boldsymbol U))_{\Omega_f} - (\pi,\nabla\cdot\boldsymbol U)_{\Omega_f} +  (\nabla\cdot\boldsymbol u,\Pi)_{\Omega_f} \\
& + \left(q,\left.\boldsymbol{U}\right|_\Gamma \cdot \boldsymbol{n}^{p} \right)_{\Gamma_-} + \beta\left(\boldsymbol P_{\boldsymbol{\tau}^p} \boldsymbol{u},\boldsymbol P_{\boldsymbol{\tau}^p} \boldsymbol{U}\right)_{\Gamma}  = (\boldsymbol F_b,\boldsymbol\phi)_{\Omega_b}  +  (G_b,r)_{\Omega_b} +  (G_p,s)_{\Omega_p} \\
& + (\boldsymbol F_f,\boldsymbol U)_{\Omega_f}  - \frac{\gamma_{pen}\mu_f}{d_h}\left(\left.\boldsymbol u\right|_\Gamma\cdot\boldsymbol n^p - v - u_p,\left.\boldsymbol U\right|_\Gamma\cdot\boldsymbol n^p - \varphi -  U_p\right)_{\Gamma_-} .
\end{split}  \end{equation*}

An extra term has been added at the end to more strictly enforce continuity of normal velocity along the interface, {{which has already been imposed weakly during integration by parts.}} Here $\gamma_{pen}$ is a penalty coefficient and $d_h$ is the characteristic mesh size. This penalty term was proposed and implemented in \cite{MassPen} in order to improve the enforcement of mass conservation along a fluid-poroelastic interface when using the primal mixed formulation.

\section{Splitting Scheme} \label{Scheme}

\subsection{Time Discretization}
We propose a noniterative, loosely-coupled scheme for solving the multi-layered fluid-poroelastic structure interaction problem described in \cref{Model} composed of two splitting strategies. The first decouples the multilayered poroelastic structure problem (equations \eqref{eq:thick}-\eqref{eq:thin}) from the fluid problem (equation \eqref{eq:fluid}) using a Backward Euler time-splitting method (see Method 3: BEsplit2 in \cite{StokesDarcyMethods}). We then further separate the poroelastic structure problem into its hydrodynamic components (equations \eqref{eq:thick2}-\eqref{eq:thick3} and \eqref{eq:thin2}-\eqref{eq:thin3}) and elastodynamic components (equations \eqref{eq:thick1} and \eqref{eq:thin1}) using the fixed-strain Biot splitting method \cite{BiotSplit,FixedStrain}. Introducing the backward difference operator $d_tf^{n+1} = \frac{f^{n+1}-f^n}{\Delta t}$ and setting the discrete structure velocities $\boldsymbol\xi^{n+1} = d_t\boldsymbol\eta^{n+1}$ and $v^{n+1} = d_t w^{n+1}$, the splitting scheme can be formulated as a sequence of three steps solved successively at each time step. Details of the scheme are shown in  \cref{table:scheme}.

\SetTblrInner{rowsep=0pt}
\begin{table}[h]
\centering
   \begin{tblr}{colspec = {| l |l |} , cell{1,4,7}{1} = {c=2}{c}, row{3,6,9} = {c, rowsep=8pt}}
\hline
\textbf{Step 1:} The pore pressure equations in $\Omega_b\cup\Omega_p$ & \\
\hline
 \scriptsize PDE & \scriptsize INTERFACE/BOUNDARY DATA \\
\hline
 \scriptsize $\substack{\displaystyle c_0d_tp^{n+1} + \alpha \nabla\cdot \boldsymbol\xi^{n} + \nabla\cdot\boldsymbol u_b^{n+1} = G_b^{n+1} \\ \\ \\
 \displaystyle \boldsymbol\kappa^{-1}\boldsymbol u_b^{n+1} = -\nabla p^{n+1}
\\ \\ \\ \displaystyle c_0^pd_tq^{n+1} - \alpha^pz\Delta_{\boldsymbol\tau^p} \tilde v^{n} + \nabla_{\boldsymbol n^p}u_p^{n+1} = G_p^{n+1}
\\ \\ \\ \displaystyle (\kappa^p)^{-1}u_p^{n+1} = -\nabla_{\boldsymbol n^p}q^{n+1}}$ 
&\scriptsize $\substack{  \displaystyle \left.q^{n+1}\right|_{\Gamma_{+}}=\left.p^{n+1}\right|_{\Gamma}, \left.u_p^{n+1}\right|_{\Gamma_{+}}=\left.\boldsymbol{u}_b^{n+1}\right|_{\Gamma} \cdot \boldsymbol{n}^{m}
\\ \\ \\ \displaystyle\left.u_p^{n+1}\right|_{\Gamma_{-}}=\left.\boldsymbol{u}^n\right|_{\Gamma} \cdot \boldsymbol{n}^{m}-v^n }$  \\
\hline
 \textbf{Step 2:} The structure displacement equations in $\Omega_b$ and $\Gamma$ & \\
\hline
 \scriptsize PDE & \scriptsize INTERFACE/BOUNDARY DATA \\
\hline 
\scriptsize $\displaystyle \rho_bd_t\boldsymbol\xi^{n+1} - \nabla\cdot\boldsymbol\sigma_b(\boldsymbol\eta^{n+1},p^{n+1}) + \gamma\boldsymbol\eta^{n+1} = \boldsymbol F_b^{n+1}$
&\scriptsize $\substack{\displaystyle 
v^{n+1} = \left.\boldsymbol\xi^{n+1}\right|_\Gamma\cdot \boldsymbol n^p, \quad w^{n+1} = \left.\boldsymbol\eta^{n+1}\right|_\Gamma\cdot \boldsymbol n^p \\ \\ \\
\displaystyle {\color{black}H\rho_pd_tv^{n+1}}   -{\color{black}H^3D \Delta_{\boldsymbol \tau^p} \Lambda^{n+1}} + H\gamma^p w^{n+1}
\\ \displaystyle + {\color{black}H\alpha^p \Delta_{\boldsymbol \tau^p}\overline{zq^{n+1}} } = F_p^{n+1}  
\\ \\ \\\displaystyle \qquad \qquad \Lambda^{n+1} = - \Delta_{\boldsymbol \tau^p}  w^{n+1} 
\\ \\ \\\displaystyle F_p^{n+1} = \left.\boldsymbol n^p\cdot\boldsymbol\sigma_b(\boldsymbol\eta^{n+1},p^{n+1})\boldsymbol n^p\right|_\Gamma +  \left.q^{n+1}\right|_{\Gamma_-} \\ \\ \\ 
\displaystyle 
\left.\boldsymbol\xi^{n+1}\right|_\Gamma\cdot \boldsymbol \tau^p = 0, \quad \left.\boldsymbol\eta^{n+1}\right|_\Gamma\cdot \boldsymbol \tau^p =0
}$ \\
\hline
 \textbf{Step 3:} The Stokes equations in $\Omega_f$ with stress data on $\Gamma$ & \\
\hline 
 \scriptsize PDE & \scriptsize INTERFACE/BOUNDARY DATA \\
\hline
\scriptsize $\substack{\displaystyle \rho_fd_t\boldsymbol u^{n+1}  - \nabla\cdot\boldsymbol\sigma_f(\boldsymbol u^{n+1},\pi^{n+1}) = \boldsymbol F_f^{n+1}
 \\ \\ \\\displaystyle \nabla \cdot\boldsymbol u^{n+1} = 0}$ 
& \scriptsize $\substack{\displaystyle {\color{black}\left.\boldsymbol\tau^p\cdot\boldsymbol\sigma_f(\boldsymbol u^{n+1},\pi^{n+1})\boldsymbol n^p\right|_\Gamma} = {\color{black}-\left.\beta\boldsymbol\tau^p\cdot\boldsymbol u^{n+1}\right|_\Gamma}
 \\ \\ \\ \displaystyle {\color{black}\left.\boldsymbol n^p\cdot\boldsymbol\sigma_f(\boldsymbol u^{n+1},\pi^{n+1})\boldsymbol n^p\right|_\Gamma} ={\color{black} -\left. q^{n+1}\right|_{\Gamma_-}}}$  \\  
\hline
\end{tblr}
\caption{Semidiscrete splitting scheme}
\label{table:scheme}
\end{table}

\subsection{Computational Domain}

Since the physical domains $\Omega_b$, $\Omega_p$, and $\Omega_f$ overlap, we introduce nonoverlapping computational domains  $\hat\Omega_b$, $\hat\Omega_p$, and $\hat\Omega_f$ obtained by gluing the upper boundary $\Gamma_+$ of $\Omega_p$ to the lower boundary $\Gamma$ of $\Omega_b$, and similarly gluing the lower boundary $\Gamma_-$ of $\Omega_p$ to the upper boundary $\Gamma$ of $\Omega_f$ (see \cref{fig:CompDomain}). This has the effect of sliding $\Omega_p$ in the negative $z$ direction by $\frac{H}{2}$ and $\Omega_f$ by $H$. All variables in the computational domain defined on a codimension 1 interior subdomain are therefore defined either on $\hat \Gamma_+$ or $\hat \Gamma_-$; in particular, traces onto $\Gamma$ of functions on $\Omega_b$ are written as traces onto $\hat\Gamma_+$ of functions on $\hat\Omega_b$ and similarly for functions on $\Omega_f$. This results in $\hat w, \hat v$, and $\hat \Lambda$ being defined on $\hat\Gamma_+$.

With this in mind, recalling the definition of the extension operator \eqref{extension}, the coupling conditions \eqref{CC} restated in the computational domain read as follows
\begin{align*}
\left\{\begin{array}{lr}
 q =  p & \text{ in } \hat\Gamma_+\times(0,T), \\
\boldsymbol u_b \cdot\boldsymbol n^p=u_p & \text{ in } \hat\Gamma_+\times(0,T), \\
\left.\boldsymbol\eta\right|_{\hat\Gamma_+} =  w \boldsymbol n^p & \text{ in } \hat\Gamma_+\times(0,T), \\
F^p = \boldsymbol\sigma_b(\boldsymbol \eta,p)\boldsymbol n^p \cdot \boldsymbol n^p - \left.{\boldsymbol\sigma}_f(\widetilde{\boldsymbol u},\widetilde{\pi})\boldsymbol n^p\right|_{\hat\Gamma_-} \cdot \boldsymbol n^p & \text{ in } \hat\Gamma_+\times(0,T), \\
\boldsymbol\sigma_f(\boldsymbol u,\pi)\boldsymbol n^p \cdot \boldsymbol\tau^p_i = -\beta\boldsymbol u\cdot \boldsymbol\tau^p_i & \text{ in } \hat\Gamma_-\times(0,T), \\
q = -\boldsymbol\sigma_f(\boldsymbol u,\pi)\boldsymbol n^p \cdot \boldsymbol n^p & \text{ in } \hat\Gamma_-\times(0,T), \\
u_p = \boldsymbol u \cdot\boldsymbol n^p - \tilde v & \text{ in } \hat\Gamma_-\times(0,T). \\
\end{array}\right.  
\end{align*}
The advantage of our splitting scheme is that now the extension and averaging operations can be calculated in between each step as opposed to occurring implicitly in the problem. In addition, we can see that decoupling the pressure and displacement variables results in two simpler subproblems. The pressures are continuous and have continuous normal derivatives across $\Gamma_+$ and so can be thought of as a single variable $P \in H^1(\hat\Omega_b\cup\hat\Omega_p)$ satisfying an elliptic equation of the form $Ad_tP = \nabla\cdot(B\nabla P) + C$ with discontinous coefficients $A, B, C$.

\tikzmath{\RR = 2; \LL =2; \HH = .3;\Sep=2;} 

\begin{figure}[h]
\centering
\scalebox{.9}{\begin{tikzpicture}[fill opacity=0.3,text opacity=1]
\fill[line width=1.8pt,blue] (0,-\RR) rectangle (\LL,0) node[midway] {\Large\color{black}$\Omega_f$};
\draw[line width=1.8pt,green!70!blue!] (0,-\RR)  node[shift={(-0.5,\RR/2)}] {\Large $\Gamma_f$} rectangle (\LL,0);
\fill[line width=1.8pt, red] (0,0)  rectangle (\LL,\RR) node[midway] {\Large\color{black}$\Omega_b$};
\draw[line width=1.8pt, orange] (0,0) node[shift={(-0.5,\RR/2)}] {\Large $\Gamma_b$} rectangle (\LL,\RR);
\draw[line width=2.5pt,red!50!blue!] (0,0) node[shift={(-0.5,0)}] {\Large $\Gamma$} -- (\LL,0);
\fill[line width=1.8pt,red!50!blue!] (0,-.2) rectangle (\LL, .2);

\draw[line width=3pt,opacity = .5,double,-Stealth] (\LL+.3,0)  -- (\LL+\Sep-.6,0) ;

\fill[line width=1.8pt,blue] (\LL+\Sep,-2*\HH-\RR) rectangle (2*\LL+\Sep,-2*\HH) node[midway] {\Large\color{black}$\hat\Omega_f$};
\draw[line width=1.8pt,green!70!blue!] (\LL+\Sep,-2*\HH-\RR)  node[shift={(-0.5,\RR/2)}] {\Large $\hat \Gamma_f$} rectangle (2*\LL+\Sep,-2*\HH);
\fill[line width=1.8pt, red] (\LL+\Sep,0)  rectangle (2*\LL+\Sep,\RR) node[midway] {\Large\color{black}$\hat\Omega_b$};
\draw[line width=1.8pt, orange] (\LL+\Sep,0) node[shift={(-0.5,\RR/2)}] {\Large $\hat \Gamma_b$} rectangle (2*\LL+\Sep,\RR);
\draw[line width=1pt] (2*\LL+\Sep,-2*\HH)  -- (2*\LL+\Sep,0);
\draw[line width=1pt] (\LL+\Sep,-2*\HH)  -- (\LL+\Sep,0);
\draw[line width=1.8pt,red!80!blue!] (\LL+\Sep,0)  node[shift={(-0.25,0)}] {\color{black} $0$} -- (2*\LL+\Sep,0) node[right] { $\hat \Gamma_+$};
\draw[line width=1.8pt,red!30!blue!] (\LL+\Sep,-2*\HH)  node[shift={(-0.4,0)}] {\color{black} $-H$} -- (2*\LL+\Sep,-2*\HH) node[right] { $\hat \Gamma_-$};
\fill[line width=1.8pt,red!50!blue!] (\LL+\Sep,-2*\HH) rectangle (2*\LL+\Sep,0) node[midway] {\color{black} $\hat \Omega_p$};

\end{tikzpicture}}
\qquad
\scalebox{.9}{\begin{tikzpicture}[line width= 1.4pt]


    \draw[-Stealth] (0,0)--(1.2*\RR,0) node[right] {$\hat z$};
    \draw[-Stealth] (0,0)--(0,1.2*\RR) node[left] {$z$};

 	\draw[blue] (-2*\HH-\RR,0)--(-2*\HH,0) node[shift={(-3*\RR/4,-.4)}] { $\hat\Omega_f$};
	\draw[red!50!blue!] (-2*\HH,0)--(0,0) node[shift={(-\HH,-.4)}] { $\hat\Omega_p$};
	\draw[red] (0,0)--(\RR,0) node[shift={(-\RR/4,-.4)}] { $\hat\Omega_b$};
	\draw[blue] (0,-\RR) node[shift={(-0.3,\RR/4)}] { $\Omega_f$} --(0,0) ;
	\draw[red] (0,0) --(0,\RR) node[shift={(-0.3,-\RR/4)}] { $\Omega_b$};
	\draw[red!50!blue!,opacity=.5,line width=2.7] (0,-.3)--(0,.3);
    \draw[dashed, blue] (-2*\HH-\RR,-\RR) -- (-2*\HH,0);
    \draw[dashed, red] (0,0) -- (\RR,\RR);
    \draw[dashed, red!50!blue!] (-2*\HH,-.3) -- (0,.3);

\end{tikzpicture}}

\caption{Discontinuous translation of the physical domain to the computational domain. {{The vertical axis shows the original, overlapping domains, while the horizontal axis shows the translated computational domains. }}}
\label{fig:CompDomain}
\end{figure}
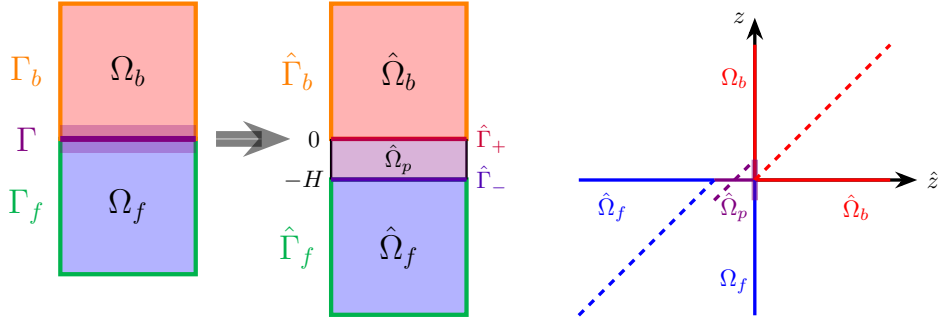

\subsection{Weak Formulation}

We specify the weak form on the appropriate domain but suppress the hat notation going forward. Note that all integrals involving $\Gamma$ are now evaluated over either $\Gamma_+$ or $\Gamma_-$. As in the continuous problem, penalty terms are added to improve mass conservation at the interface, which is known to be insufficiently accurate when primal or mixed primal formulations of the Biot problem are used (see \cite{MassPen}). {{The scheme in weak form reads:}}

Step 1: Find $(p^{n+1}, \boldsymbol u_b^{n+1}, q^{n+1}, u_p^{n+1}) \in Q_s$ so that for every $(r, \boldsymbol U_b, s, U_p) \in Q_s$:
\begin{equation*} \begin{split} 
  &  c_0 (d_t p^{n+1},r)_{\Omega_b} - ( \boldsymbol u_b^{n+1},\nabla r)_{\Omega_b} +  ( \nabla p^{n+1},\boldsymbol U_b)_{\Omega_b}  +  ( \kappa^{-1}\boldsymbol u_b^{n+1},\boldsymbol U_b)_{\Omega_b}   \\
&  + c_0^p (d_t q^{n+1},s)_{\Omega_p} - (  u_p^{n+1},\nabla_{\boldsymbol n^p} s)_{\Omega_p} +  ( \nabla_{\boldsymbol n^p} q^{n+1}, U_p)_{\Omega_p}  +  ( (\kappa^p)^{-1} u_p^{n+1}, U_p)_{\Omega_p}  \\
& =  (G_b^{n+1},r)_{\Omega_b}  + (G_p^{n+1},s)_{\Omega_p}  -  \alpha(\nabla\cdot\boldsymbol\xi^{n},r)_{\Omega_b} -  \alpha^p(\nabla_{\boldsymbol\tau^p}\tilde v^{n},\left(z + H/2\right)\nabla_{\boldsymbol\tau^p}s)_{\Omega_p}   \\
&+ \left(\boldsymbol{u}^n \cdot \boldsymbol{n}^{p}-\tilde v^n,s  \right)_{\Gamma_-} +   \frac{\gamma_{pen}\mu_f}{d_h}\left(\boldsymbol u^n\cdot\boldsymbol n^p - \tilde v^n - u_p^{n+1}, U_p\right)_{\Gamma_-}
\end{split}  \end{equation*}

Step 2: Find $(\boldsymbol\xi^{n+1}, v^{n+1},\Lambda^{n+1}) \in V_s$ with $d_t\boldsymbol\eta = \boldsymbol \xi, d_t w = v$ so that for every $(\boldsymbol \phi, \varphi, \psi) \in V_s$:
\begin{equation*} \begin{split} 
&  \rho_b(d_t \boldsymbol\xi^{n+1},\boldsymbol\phi)_{\Omega_b}  + \lambda_b(\nabla\cdot\boldsymbol\eta^{n+1},\nabla\cdot\boldsymbol\phi)_{\Omega_b} + 2\mu_b(\boldsymbol D(\boldsymbol\eta^{n+1}),\boldsymbol D(\boldsymbol\phi))_{\Omega_b} + \gamma (\boldsymbol\eta^{n+1},\boldsymbol\phi)_{\Omega_b} \\
& + H\rho_p(d_t v^{n+1},\varphi)_{\Gamma_+} + H^3D(\nabla_{\boldsymbol\tau^p}\Lambda^{n+1},\nabla_{\boldsymbol\tau^p}\varphi )_{\Gamma_+} + H^3D(d_t\Lambda^{n+1},\psi)_{\Gamma_+}  \\
&- H^3D(\nabla_{\boldsymbol\tau^p}v^{n+1},\nabla_{\boldsymbol\tau^p}\psi )_{\Gamma_+}   + H\gamma^p(w^{n+1},\varphi)_{\Gamma_+} {\color{black} = }  (\boldsymbol F_b^{n+1},\boldsymbol\phi)_{\Omega_b} +  \alpha (p^{n+1},\nabla\cdot\boldsymbol\phi)_{\Omega_b}   \\
& + H\alpha^p\left( \nabla_{\boldsymbol\tau^p}\overline{\left(z + H/2\right)q^{n+1}},\nabla_{\boldsymbol\tau^p}\varphi \right)_{\Gamma_+}  +   \left(\widetilde{\left.q^{n+1}\right|_{\Gamma_-}} ,\varphi  \right)_{\Gamma_+} \\
 &  +\frac{\gamma_{pen}\mu_f}{d_h}\left(\widetilde{\left.\boldsymbol u^n\right|_{\Gamma_-}}\cdot\boldsymbol n^p - v^{n+1} - \widetilde{\left.u_p^{n+1}\right|_{\Gamma_-}}, \varphi \right)_{\Gamma_+}
\end{split}  \end{equation*}

Step 3: Find $(\boldsymbol u^{n+1}, \pi^{n+1}) \in V_f$ so that for every $(\boldsymbol U, \Pi) \in V_f$:
\begin{equation*} \begin{split} 
& \rho_f(d_t\boldsymbol u^{n+1},\boldsymbol U)_{\Omega_f} + 2\mu_f(\boldsymbol D(\boldsymbol u^{n+1}),\boldsymbol D(\boldsymbol U))_{\Omega_f} - (\pi^{n+1},\nabla\cdot\boldsymbol U)_{\Omega_f} +  (\nabla\cdot\boldsymbol u^{n+1},\Pi)_{\Omega_f}   \\
&  + \beta\left(\boldsymbol P_{\boldsymbol{\tau}^p} \boldsymbol{u},\boldsymbol P_{\boldsymbol{\tau}^p} \boldsymbol{U}\right)_{\Gamma_-} = (\boldsymbol F_f^{n+1},\boldsymbol U)_{\Omega_f} - \left(q^{n+1},\boldsymbol{U} \cdot \boldsymbol{n}^{p} \right)_{\Gamma_-} \\
&   - \frac{\gamma_{pen}\mu_f}{d_h}\left(\boldsymbol u^{n+1}\cdot\boldsymbol n^p - \tilde v^{n+1} - u_p^{n+1},\boldsymbol U\cdot\boldsymbol n^p \right)_{\Gamma_-}
\end{split}  \end{equation*}

\section{Stability} \label{Stability}

In this section, we analyze the stability of the partitioned scheme in the absence of the penalty term. We start by discretizing in space using the finite element method with a conforming finite element triangulation $\mathcal T_h$ of $\Omega$, where $h$ is the maximum diameter of an element in $\mathcal T_h$. For a simplex $K \in \mathcal T_h$, let $h_K$ be the diameter of $K$ and $\rho_K$ the diameter of the inscribed sphere. We assume that the family of triangulations $\mathcal T_h$ satisfies the following usual inverse assumption in $\Omega_b$ 
and has the following shape regularity property in $\Omega_b\cup\Omega_f$  \cite{ciarlet_finite_2002,ern_finite_2021}:
\begin{equation}\label{InverseAndShape}
\sup_K\frac{h}{h_K} < M_{INV} \ \text{in}\  \Omega_b,\  \text{and}\ 
\sup_K\frac{h_K}{\rho_K} < M_{SR}\  \text{in}\  \Omega_b\cup\Omega_f
\end{equation}
for some constants $M_{INV}$ and $M_{SR}$.
We specifically do not assume shape regularity in the thin domain $\Omega_p$ since in practice this is a stricter condition on the mesh due to the anisotropy. We define the finite elements spaces $\mathcal Q^b_h \subset H_0^1(\Omega_b), \mathcal W^b_h \subset \boldsymbol L^2(\Omega_b), \mathcal Q^p_h \subset H^1(\Omega_p), \mathcal W^p_h \subset  L^2(\Omega_p), \mathcal V^{b}_h \subset \boldsymbol H^1_0(\Omega_b), \mathcal W^s_h \subset H^1(\Gamma), \mathcal W^f_h \subset \boldsymbol H_0^1(\Omega_f), \mathcal Q^f_h \subset  L^2(\Omega_f)$ and $\mathcal V^{p}_h = \left.\mathcal V^{b}_h\right|_{\Gamma^+}$ and assume that the pairs $\mathcal V^{p}_h, \mathcal W^{s}_h$ and $\mathcal W^{f}_h, Q^f_h$ are inf-sup stable. We thus have the corresponding mixed finite element spaces:
\begin{equation*} \begin{split} 
Q_h^s & = \left\{(p_h, \boldsymbol u_{b,h}, q_h, u_{p,h}) \in \mathcal Q^b_h\times \mathcal W_h^b\times\mathcal  Q^p_h\times \mathcal W_h^p : \left.q_h\right|_{\Gamma_{+}}=\left.p_h\right|_{\Gamma_+} \right\} \subset \mathcal Q^s, \\
\mathcal V_h^s & = \mathcal V_h^b\times \mathcal W_h^s\subset \mathcal V^s, \\
 \mathcal V_h^f &= \mathcal W_h^f\times \mathcal Q_h^f\subset \mathcal V^f.
\end{split} \end{equation*}

\subsection{Preliminaries}
In the thick structure, will make use of the following standard inequalities for $p \in H^1_0(\Omega_b)$ (here all the norms refer to the $L^2$ norms):
\begin{equation*}  \text{Poincar\'{e} inequality:} \quad ||p||_{\Omega_b} \leq C_{P}||\nabla p||_{\Omega_b} \leq \frac{C_P}{\sqrt{k_{min}}}||\boldsymbol\kappa^{-1/2}\boldsymbol u_b||_{\Omega_b}. \end{equation*} 
\begin{equation*}  \text{Trace inequality:} \quad ||p||_{\Gamma_+} \leq  C_{TR}||\nabla p||_{\Omega_b} \leq  \frac{C_{TR}}{\sqrt{k_{min}}}||\boldsymbol\kappa^{-1/2}\boldsymbol u_b||_{\Omega_b}. \end{equation*} 
Due to the inverse assumption in $\Omega_b$ specified in \eqref{InverseAndShape} we have the inverse inequality (see Theorem 3.2.6 in \cite{ciarlet_finite_2002})
\begin{equation*}  ||\nabla\cdot \boldsymbol\xi_h||_{\Omega_b} \leq\sqrt{3} ||\nabla \boldsymbol\xi_h||_{\Omega_b} \leq \frac{C_{INV}\sqrt{3}}{h}||\boldsymbol\xi_h||_{\Omega_b} \end{equation*} 
for every $\xi_h \in \mathcal V_h^b$. Furthermore, since $\mathcal V^{p}_h = \left.\mathcal V^{b}_h\right|_{\Gamma^+}$, 
we also have
\begin{equation*}  ||\nabla_{\pmb\tau^p} v_h||_{\Gamma_+} \leq \frac{B_{INV}}{h}||v_h||_{\Gamma_+} \end{equation*} 
for every $v_h \in \mathcal V^{p}_h$. In the thick Biot and fluid domains, shape regularity gives for $\xi_h \in \mathcal V_h^b$ and $\boldsymbol u_h \in \mathcal W_h^f$ the discrete trace inverse inequalities (see Lemma 12.8 in \cite{ern_finite_2021})
\begin{equation*}   ||v_h||_{\Gamma_+} \leq \frac{C_{TI}}{\sqrt{h}}||\boldsymbol \xi_h||_{\Omega_b}, \qquad\qquad ||\boldsymbol u_h||_{\Gamma_-} \leq \frac{A_{TI}}{\sqrt{h}}||\boldsymbol u_h||_{\Omega_f}. \end{equation*} 
Since we do not assume shape regularity in the thin domain, the following theorem is used as a suitable alternative to the discrete trace inverse inequality for our purposes.
\begin{theorem} \label{thm:Poinc}
The pressure $q\in H^1(\Omega_p)$ satisfies the trace inequality
\begin{equation*}  ||q||_{\Gamma_-} \leq  \frac{C_{TR}\sqrt{2}}{\sqrt{k_{min}}}||\boldsymbol\kappa^{-1/2}\boldsymbol u_b||_{\Omega_b} +  \sqrt{\frac{2H}{\kappa^p}}||(\kappa^p)^{-1/2}u_p||_{\Omega_p} \end{equation*} 
and the average inequality
\begin{equation*}  \left|\left|\overline{\left(z+H/2\right)q}\right|\right|_{\Gamma_+} \leq \frac{H^{3/2}}{8\sqrt{\kappa^p}}||(\kappa^p)^{-1/2}u_p||_{\Omega_p}. \end{equation*} 
\end{theorem}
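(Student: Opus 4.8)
The plan is to reduce both estimates to one-dimensional inequalities in the transverse variable, integrated over $\Gamma$, using only the fundamental theorem of calculus and Darcy's law \eqref{eq:thin3}; in particular no inverse or discrete trace inequality is invoked, which is precisely why the bounds survive the absence of shape regularity in $\Omega_p$. I work in the computational domain, so $\Omega_p=\Gamma\times[-H,0]$ with $\Gamma_+$ the face $z=0$ and $\Gamma_-$ the face $z=-H$, and I record two facts used repeatedly. First, Darcy's law gives $\nabla_{\boldsymbol n^p}q=\partial_z q=-(\kappa^p)^{-1}u_p$ a.e.\ in $\Omega_p$, and since $\kappa^p$ is a positive constant, $\|\partial_z q\|_{\Omega_p}^2=\tfrac1{\kappa^p}\|(\kappa^p)^{-1/2}u_p\|_{\Omega_p}^2$. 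Second, by continuity of pressure across the interface (the constraint defining $Q^s$), the trace of $q$ on $\Gamma_+$ coincides with the trace on $\Gamma_+$ of $p\in H^1_0(\Omega_b)$. Since $q\in H^1(\Omega_p)$, for a.e.\ $(x,y)\in\Gamma$ the slice $z\mapsto q(x,y,z)$ belongs to $H^1(-H,0)$, so these pointwise-in-$(x,y)$ manipulations are justified and Fubini applies when integrating back over $\Gamma$.

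For the trace inequality, fix $(x,y)\in\Gamma$ and write $q(x,y,-H)=q(x,y,0)-\int_{-H}^0\partial_z q(x,y,z)\,dz$. Cauchy--Schwarz in $z$ together with the elementary bound $(a+b)^2\le 2a^2+2b^2$ gives $|q(x,y,-H)|^2\le 2|p(x,y,0)|^2+2H\int_{-H}^0|\partial_z q(x,y,z)|^2\,dz$. Integrating over $\Gamma$ yields $\|q\|_{\Gamma_-}^2\le 2\|p\|_{\Gamma_+}^2+2H\|\partial_z q\|_{\Omega_p}^2$. I then insert the trace inequality $\|p\|_{\Gamma_+}\le\tfrac{C_{TR}}{\sqrt{k_{min}}}\|\boldsymbol\kappa^{-1/2}\boldsymbol u_b\|_{\Omega_b}$ recorded above together with the Darcy identity for $\|\partial_z q\|_{\Omega_p}$, and finish with $\sqrt{A^2+B^2}\le A+B$ for $A,B\ge0$ to reach the stated form.

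For the average inequality the key observation is that the weight $g(z):=z+H/2$ has zero mean on $[-H,0]$: its antiderivative $G(z):=\int_{-H}^z g(s)\,ds=\tfrac12 z(z+H)$ vanishes at both endpoints and satisfies $\|G\|_{L^\infty(-H,0)}=H^2/8$, attained at $z=-H/2$. Integrating by parts, $\int_{-H}^0 g(z)\,q(x,y,z)\,dz=-\int_{-H}^0 G(z)\,\partial_z q(x,y,z)\,dz$, so the contribution of the interface value inherited from $p$ cancels and only $\partial_z q$ survives --- this is exactly why the final bound involves $u_p$ alone and not the $\Omega_b$ data. Bounding $|G|\le H^2/8$ and applying Cauchy--Schwarz in $z$ gives $|\overline{(z+H/2)q}(x,y)|\le\tfrac1H\cdot\tfrac{H^2}{8}\cdot\sqrt H\,\bigl(\int_{-H}^0|\partial_z q(x,y,z)|^2\,dz\bigr)^{1/2}=\tfrac{H^{3/2}}{8}\bigl(\int_{-H}^0|\partial_z q(x,y,z)|^2\,dz\bigr)^{1/2}$; squaring, integrating over $\Gamma$, and substituting the Darcy identity gives $\|\overline{(z+H/2)q}\|_{\Gamma_+}\le\tfrac{H^{3/2}}{8\sqrt{\kappa^p}}\|(\kappa^p)^{-1/2}u_p\|_{\Omega_p}$.

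None of the individual steps is computationally heavy, so the main difficulty is conceptual: choosing to argue slice-by-slice along the thin direction rather than reaching for a discrete trace or inverse inequality (unavailable in $\Omega_p$ here), and, for the second estimate, spotting the zero-mean cancellation of the weight $z+H/2$, which is what produces the favorable power $H^{3/2}$ and the decoupling from the thick-domain pressure. A minor bookkeeping point is to carry the weighted $L^2$-norms consistently so that $\nabla_{\boldsymbol n^p}q$ is converted into the dissipation quantities $\|\boldsymbol\kappa^{-1/2}\boldsymbol u_b\|_{\Omega_b}$ and $\|(\kappa^p)^{-1/2}u_p\|_{\Omega_p}$ appearing on the right-hand sides.
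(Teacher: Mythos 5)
Your proof is correct and follows essentially the same route as the paper: the trace estimate comes from the fundamental theorem of calculus along $z$, the elementary bound $(a+b)^2\le 2(a^2+b^2)$, Cauchy--Schwarz, the $\Omega_b$ trace inequality for $p$, and Darcy's law; the average estimate comes from integrating by parts against the antiderivative $G(z)=\tfrac12 z(z+H)$ (which vanishes at both endpoints), the $L^\infty$ bound $|G|\le H^2/8$, Cauchy--Schwarz, and Darcy's law. Your ``zero-mean of $z+H/2$'' remark is a nice way to see at a glance why the boundary terms drop and why the bound decouples from the thick-domain data, but the computation is the same as the paper's.
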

\begin{proof}
We have
\begin{equation*}  q\left(x,y,-H\right) = p\left(x,y,0\right) - \int_{-H}^{0}\nabla_{\boldsymbol n^p} q(x,y,s) \ ds \end{equation*} 
and so by Cauchy-Schwartz
\begin{equation*}  q\left(x,y,-H\right)^2 \leq 2p\left(x,y,0\right)^2 + 2H\int_{-H}^{0}\left|\nabla_{\boldsymbol n^p} q(x,y,s) \right|^2 \ ds. \end{equation*} 
Integrating over $x$ and $y$ then yields the trace inequality
\begin{equation*}  ||q||_{\Gamma_-}^2 \leq 2 ||p||_{\Gamma_+}^2 + 2H||\nabla_{\boldsymbol\tau^p} q||_{\Omega_p}^2 \leq \frac{2C_{TR}^2}{k_{min}}||\boldsymbol\kappa^{-1/2}\boldsymbol u_b||_{\Omega_b}^2 +  \frac{2H}{\kappa^p}||(\kappa^p)^{-1/2}u_p||_{\Omega_p}^2. \end{equation*} 
Similarly, for any $x, y$, we use integration by parts to see that $\overline{\left(z+H/2\right)q}(x,y,0)$ is given by
\begin{equation*} \frac{1}{H}\int_{-H}^{0}\left(z+\frac{H}{2}\right)q(x,y,z) dz = -\frac{1}{2H}\int_{-H}^{0}\left(z^2+Hz\right)\nabla_{\boldsymbol n^p}q(x,y,z) dz \end{equation*} 
and so
\begin{equation*}  \left|\overline{\left(z+H/2\right)q}(x,y,0)\right| \leq  \int_{-H}^{0}\left|\frac{z^2+Hz}{-2H}\right| \left|\nabla_{\boldsymbol n^p}q(x,y,z)\right|dz \leq \frac{H}{8}\int_{-H}^{0}\left|\nabla_{\boldsymbol n^p}q(x,y,z)\right|dz.   \end{equation*} 
Squaring both sides, applying Cauchy-Schwarz, and then integrating over $x, y$ yields 
\begin{equation*} \left|\left|\overline{\left(z+H/2\right)q}\right|\right|_{\Gamma_+}^2 \leq \frac{H^3}{64}\left|\left|\nabla_{\boldsymbol n^p}q\right|\right|_{\Omega_p}^2 \leq \frac{H^3}{64\kappa^p}\left|\left|(\kappa^p)^{-1/2}u_p\right|\right|_{\Omega_p}^2. \end{equation*} 
\end{proof}

We will also repeatedly make use of the following lemmas.
\begin{lemma} \label{lem:1} Suppose $C^2 < AB$ for $A, B > 0$. Then there exist positive constants $\hat A, \hat B > 0$ such that $\hat Ax_1^2 + \hat Bx_2^2 \leq Ax_1^2 + Bx_2^2 + 2Cx_1x_2$ for all $x_1,x_2$.
\end{lemma}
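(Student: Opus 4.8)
The plan is to rewrite the desired inequality as the nonnegativity of a single quadratic form and then exhibit the constants by using the strict inequality $C^2<AB$ to leave a positive margin. Moving everything to one side, the claim $\hat A x_1^2+\hat B x_2^2\le A x_1^2+B x_2^2+2Cx_1x_2$ holding for all $x_1,x_2$ is equivalent to $Q(x_1,x_2):=(A-\hat A)x_1^2+2Cx_1x_2+(B-\hat B)x_2^2\ge 0$ on all of $\R^2$. I would first record the elementary sufficient condition for this: if $A-\hat A>0$, $B-\hat B>0$ and $(A-\hat A)(B-\hat B)\ge C^2$, then completing the square gives \[ Q(x_1,x_2)=(A-\hat A)\Bigl(x_1+\tfrac{C}{A-\hat A}x_2\Bigr)^2+\Bigl(B-\hat B-\tfrac{C^2}{A-\hat A}\Bigr)x_2^2\ge 0, \] since both coefficients are nonnegative.

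The remaining task is to choose $\hat A,\hat B>0$ meeting these three constraints, which is precisely where the hypothesis $C^2<AB$ enters. I would set $t:=\tfrac12\bigl(1+|C|/\sqrt{AB}\bigr)$; strictness of $C^2<AB$ gives $|C|/\sqrt{AB}<1$, hence $t\in(0,1)$ and $t^2AB\ge C^2$. Taking $\hat A:=(1-t)A$ and $\hat B:=(1-t)B$ then yields $\hat A,\hat B>0$, $A-\hat A=tA>0$, $B-\hat B=tB>0$, and $(A-\hat A)(B-\hat B)=t^2AB\ge C^2$, so the criterion above applies and the inequality holds for all $x_1,x_2$.

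I do not anticipate a genuine obstacle; this is essentially a one-line fact about positive definite $2\times 2$ forms. The only point worth flagging is that strictness of $C^2<AB$ is essential: if $C^2=AB$ the form $Ax_1^2+Bx_2^2+2Cx_1x_2$ is only positive semidefinite (it vanishes along a line), so no positive $\hat A,\hat B$ can work. One could equivalently argue spectrally, noting that the symmetric matrix with entries $A,C,C,B$ is positive definite under $C^2<AB$ and hence bounded below by $\lambda_{\min}I$ with $\lambda_{\min}>0$, so $\hat A=\hat B=\lambda_{\min}$ would suffice; the explicit choice of $t$ above just avoids computing an eigenvalue.
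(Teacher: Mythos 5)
Your proof is correct and takes essentially the same approach as the paper: both first note the abstract positive-definiteness/dominance viewpoint, and then give an explicit construction of $\hat A,\hat B$ that exploits the strict gap $C^2<AB$ to leave a positive margin. The paper uses Young's inequality with a free parameter $\epsilon$ satisfying $C^2<\epsilon<AB$ (giving $\hat A=A-\epsilon/B$, $\hat B=B-BC^2/\epsilon$), while you complete the square with a scaling factor $t$; these are the same computation in slightly different form, so no substantive difference in method.
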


\begin{proof} This follows from the fact that $Ax_1^2 + Bx_2^2 + 2Cx_1x_2$ is the quadratic form represented by the positive definite matrix $\left(\begin{matrix} A & C \\ C & B \end{matrix}\right)$ and so there exists some diagonal positive definite quadratic form dominated by it. More constructively, we use the fact that there exists some $\epsilon > 0$ such that $C^2 < \epsilon < AB$ and let $\hat A = A - \frac{\epsilon}{B}$ and $\hat B := B - \frac{BC^2}{\epsilon}$. Then $\hat A, \hat B > 0$ and by Young's inequality we have
\begin{equation*}   2|Cx_1x_2| = 2\left(|x_1|\sqrt{\frac{\epsilon}{B}}\right)\left(|Cx_2|\sqrt{\frac{B}{\epsilon}}\right) \leq \frac{\epsilon}{B}x_1^2 + \frac{BC^2}{\epsilon}x_2^2, \end{equation*} 
and thus $Ax_1^2 + Bx_2^2 + 2Cx_1x_2 \geq \left(A - \frac{\epsilon}{B}\right)x_1^2 + \left(B - \frac{BC^2}{\epsilon}\right)x_2^2 = \hat Ax_1^2 + \hat Bx_2^2$.
\end{proof}

\begin{lemma} \label{lem:2} Let $A = \left\{A_{ij}\right\}$ be an $n\times n$ symmetric matrix with positive diagonal entries $A_{11}, A_{22}, \ldots, A_{nn}$. If $A_{ij}^2 < \frac{1}{(n-1)^2}A_{ii}A_{jj}$ for all $i \neq j$, then $A$ is positive definite and there exists a diagonal matrix $D = \operatorname{diag}(A_1,A_2,...,A_n)$, $0 < A_i < A_{ii}$ for $i = 1,\ldots, n$, such that $A \geq D$.
\end{lemma}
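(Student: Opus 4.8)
The plan is to strip off the diagonal by a congruence and recognize the rescaled matrix as a strictly diagonally dominant perturbation of the identity. Set $y_i=\sqrt{A_{ii}}\,x_i$ and introduce the symmetric matrix $B$ with $B_{ii}=1$ and $B_{ij}=A_{ij}/\sqrt{A_{ii}A_{jj}}$ for $i\neq j$, so that $x^{T}Ax=y^{T}By$ and the map $x\mapsto y$ is a bijection. The hypothesis $A_{ij}^2<\frac{1}{(n-1)^2}A_{ii}A_{jj}$ is then exactly $|B_{ij}|<\frac{1}{n-1}$ off the diagonal, i.e.\ every Gershgorin row sum $\sum_{j\neq i}|B_{ij}|$ is strictly less than $1$. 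Hence $B$ is symmetric, strictly diagonally dominant with positive diagonal, therefore positive definite, and so is $A$.

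To upgrade this to the bound $A\geq D$ I would make the estimate quantitative in the spirit of \cref{lem:1}, using Young's inequality $2|y_iy_j|\leq y_i^2+y_j^2$ in the cross terms:
\[
y^{T}By=\sum_i y_i^2+2\sum_{i<j}B_{ij}y_iy_j\;\geq\;\sum_i\Bigl(1-\sum_{j\neq i}|B_{ij}|\Bigr)y_i^2 .
\]
Each coefficient $\mu_i:=1-\sum_{j\neq i}|B_{ij}|$ is strictly positive by hypothesis, so setting $\mu_i':=\min(\mu_i,\tfrac12)\in(0,1)$ we still have $y^{T}By\geq\sum_i\mu_i'y_i^2$ (shrinking a coefficient only weakens the bound, which already holds). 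Undoing the substitution gives $x^{T}Ax\geq\sum_i\mu_i'A_{ii}\,x_i^2$, that is $A\geq D$ with $D=\diag(A_1,\dots,A_n)$, $A_i:=\mu_i'A_{ii}$; by construction $0<A_i<A_{ii}$, which is the asserted inequality.

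There is no genuinely hard step; the one point requiring a little care is the borderline case in which some $\mu_i=1$ (the $i$-th row of $B$ is already diagonal), which would give $A_i=A_{ii}$ rather than the required strict inequality — this is exactly why I truncate at $\tfrac12$, and the truncation costs nothing. If one prefers a single scalar in place of the vector $(\mu_i')$, an equivalent route is to take $\delta:=\min(\lambda_{\min}(B),\tfrac12)\in(0,1)$, note $B\geq\lambda_{\min}(B)\,I\geq\delta I$, and set $D=\delta\,\diag(A_{11},\dots,A_{nn})$.
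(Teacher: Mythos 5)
Your proof is correct and takes essentially the same route as the paper: the paper's one-line argument rescales by $S=\operatorname{diag}(A_{ii})^{-1/2}$ and invokes Gershgorin plus Sylvester's law of inertia, which is exactly your congruence $x^{T}Ax=y^{T}By$ combined with the diagonal-dominance estimate. You go a bit further than the paper's terse proof by making the $A\geq D$ bound explicit via Young's inequality and the truncation $\mu_i'=\min(\mu_i,\tfrac12)$, which is a useful filling-in of a detail the paper leaves to the reader.
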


\begin{proof}
This follows from Sylvester's law of inertia after applying the Gershgorin circle theorem to the matrix $SAS^T$ for $S = \operatorname{diag}(A_{11},A_{22},...,A_{nn})^{-1/2}$.
\end{proof}

\begin{remark}
For any positive definite matrix $A = \left\{A_{ij}\right\}$, one can readily show that $A_{ij}^2 < A_{ii}A_{jj}$ for $i \neq j$ by considering the $2\times 2$ principle minors $A_{ii}A_{jj} - A_{ij}^2 > 0$. This lemma can be seen as a type of converse; if each of the $2\times 2$ principle submatrices satisfy the stronger condition $A_{ij}^2 < \frac{1}{(n-1)^2}A_{ii}A_{jj}$ for $i\neq j$, then $A$ can be shown to be positive definite.
\end{remark}

\begin{remark}
If the matrix $A$ is known to have some zero entries, then it is actually sufficient to show positive definiteness if $A_{ij}^2 < \frac{1}{n_in_j}A_{ii}A_{jj}$ for all $i \neq j$ where $n_i$ is the number of nonzero entries off the diagonal in row $i$. Furthermore, if $A_{ij}$ is a sum $\sum_{k=1}^l A_{ij,k}$, then it is sufficient to show $A_{ij,k}^2 < \frac{1}{l^2n_in_j}A_{ii}A_{jj}$ for all $i \neq j$ and $k=1,\ldots,l$.
\end{remark}

\subsection{Stability Results}

\begin{theorem} \label{thm:stab1}
Assume that the fluid-multilayered poroelastic system is isolated, that is, $\boldsymbol F_b = 0, G_b = 0, G_m = 0$, and $\boldsymbol F_f =  0$. Let $M = \min\left\{\frac{\rho_b}{C_{TI}^2},\frac{\rho_f}{A_{TI}^2}\right\}$. Then under the condition
\begin{equation*}  \Delta t < \min\left\{\frac{\rho_bk_{min}}{24\alpha^2C_{INV}^2C_{PF}^2}h^2,\frac{128\rho_p\kappa^p}{3(\alpha^p)^2B_{INV}^4}\left(\frac{h}{H}\right)^4,\frac{Mk_{min}}{16C_{TR}^2}h,\frac{M\kappa^p}{6}\left(\frac{h}{H}\right)\right\}, \end{equation*} 
the following {{stability}} estimate holds for some constants $\epsilon_{\boldsymbol\xi}, \epsilon_{v}, \epsilon_{\boldsymbol u}, \epsilon_{\boldsymbol u_b}, \epsilon_{u_p} \in (0,1)$:
\begin{align*}\small \begin{split} & \mathcal E(\boldsymbol A_h^{n+1}) + \epsilon_{\boldsymbol\xi}\rho_b||\boldsymbol\xi_h^{n+1} - \boldsymbol\xi_h^{n}||^2_{\Omega_b} + \lambda_b||\nabla\cdot(\boldsymbol\eta_h^{n+1}-\boldsymbol\eta_h^{n})||^2_{\Omega_b} + 2\mu_b||\boldsymbol D(\boldsymbol\eta_h^{n+1}-\boldsymbol\eta_h^{n})||^2_{\Omega_b} \\
& +  \gamma||\boldsymbol\eta_h^{n+1}-\boldsymbol\eta_h^{n}||^2_{\Omega_b}  + \epsilon_{v}\rho_p||v_h^{n+1} - v_h^n||^2_{\Gamma_+} + H^3D||\Lambda_h^{n+1} - \Lambda_h^n||^2_{\Gamma_+} + H\gamma^p||w_h^{n+1}-w_h^n||^2_{\Gamma_+} \\
& +  c_0||p_h^{n+1}-p_h^n||^2_{\Omega_b} + c_0^p||q_h^{n+1}-q_h^n||^2_{\Omega_p} + \epsilon_{\boldsymbol u}\rho_f||\boldsymbol u_h^{n+1}-\boldsymbol u_h^{n}||^2_{\Omega_f} + \epsilon_{\boldsymbol u_b}2\Delta t||\boldsymbol\kappa^{-1/2}\boldsymbol u_{b,h}^{n+1}||^2_{\Omega_b} \\
&+ \epsilon_{u_p}2\Delta t||(\kappa^p)^{-1/2} u_{p,h}^{n+1}||^2_{\Omega_p} + 4\Delta t \mu_f||\boldsymbol D (\boldsymbol u_h^{n+1})||^2_{\Omega_f} + 2\beta\Delta t \left|\left|\boldsymbol P_{\boldsymbol\tau^p}\boldsymbol{u}_h^{n+1}\right|\right|^2_{\Gamma_-} \leq  \mathcal E(\boldsymbol A_h^{n}).
\end{split}  \end{align*}
\end{theorem}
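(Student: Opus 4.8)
The proof is a discrete energy argument: test each of the three sub-steps with its own solution variables, sum the resulting identities, and absorb the splitting errors into the numerical dissipation and the incremental norms $\|\cdot^{n+1}-\cdot^{n}\|$, at the price of the stated time-step restriction.

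Specifically, I would test Step~1 with $(r,\boldsymbol U_b,s,U_p)=(p_h^{n+1},\boldsymbol u_{b,h}^{n+1},q_h^{n+1},u_{p,h}^{n+1})$, Step~2 with $(\boldsymbol\phi,\varphi,\psi)=(\boldsymbol\xi_h^{n+1},v_h^{n+1},\Lambda_h^{n+1})$, and Step~3 with $(\boldsymbol U,\Pi)=(\boldsymbol u_h^{n+1},\pi_h^{n+1})$. Applying the polarization identity $(a-b,a)=\frac12(\|a\|^2-\|b\|^2+\|a-b\|^2)$ to every backward-difference term and using $\boldsymbol\xi_h^{n+1}=d_t\boldsymbol\eta_h^{n+1}$, $v_h^{n+1}=d_tw_h^{n+1}$, the skew pairs cancel (the two $(\nabla p_h^{n+1},\boldsymbol u_{b,h}^{n+1})$-type pairs, the Stokes pressure--divergence pair, and the two $H^3D(\nabla_{\boldsymbol\tau^p}\cdot,\nabla_{\boldsymbol\tau^p}\cdot)$ terms coupling $v_h^{n+1}$ and $\Lambda_h^{n+1}$), and what survives on the left is the dissipation $\|\boldsymbol\kappa^{-1/2}\boldsymbol u_{b,h}^{n+1}\|_{\Omega_b}^2$, $\|(\kappa^p)^{-1/2}u_{p,h}^{n+1}\|_{\Omega_p}^2$, $2\mu_f\|\boldsymbol D(\boldsymbol u_h^{n+1})\|_{\Omega_f}^2$, $\beta\|\boldsymbol P_{\boldsymbol\tau^p}\boldsymbol u_h^{n+1}\|_{\Gamma_-}^2$. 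Multiplying by $2\Delta t$ and summing gives
\[
\mathcal E(\boldsymbol A_h^{n+1})+\mathcal I_{\mathrm{diff}}^{n+1}+\mathcal D_{\Delta t}^{n+1}=\mathcal E(\boldsymbol A_h^{n})+I_1+I_2+I_3,
\]
where $\mathcal D_{\Delta t}^{n+1}$ is $2\Delta t$ times the dissipation, $\mathcal I_{\mathrm{diff}}^{n+1}$ collects the increments $\rho_b\|\boldsymbol\xi_h^{n+1}-\boldsymbol\xi_h^n\|_{\Omega_b}^2,\lambda_b\|\nabla\cdot(\boldsymbol\eta_h^{n+1}-\boldsymbol\eta_h^n)\|_{\Omega_b}^2,\dots,\rho_f\|\boldsymbol u_h^{n+1}-\boldsymbol u_h^n\|_{\Omega_f}^2$ with their \emph{full} coefficients, and --- because $\boldsymbol\xi^n$, $v^n$, $\boldsymbol u^n$ enter Step~1 explicitly --- the only non-cancelling terms are $I_1=2\Delta t\,\alpha\,(p_h^{n+1},\nabla\cdot(\boldsymbol\xi_h^{n+1}-\boldsymbol\xi_h^n))_{\Omega_b}$, $I_2=2\Delta t\,H\alpha^p\,(\nabla_{\boldsymbol\tau^p}\overline{(z+H/2)q_h^{n+1}},\nabla_{\boldsymbol\tau^p}(v_h^{n+1}-v_h^n))_{\Gamma_+}$ (rewriting the $\Omega_p$-integral of the Step~1 moment term as a $\Gamma_+$-integral against the $z$-average, which is legitimate because $\tilde v^n$ is $z$-independent), and the interface group $I_3$, which after $v^n,\boldsymbol u^n$ cancel against $v^{n+1},\boldsymbol u^{n+1}$ reduces to $2\Delta t[(q_h^{n+1},v_h^{n+1}-v_h^n)_{\Gamma_-}-(q_h^{n+1},(\boldsymbol u_h^{n+1}-\boldsymbol u_h^n)\cdot\boldsymbol n^p)_{\Gamma_-}]$.

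The remaining task is to bound $I_1,I_2,I_3$ by a quadratic form in the increment norms $x_1=\|\boldsymbol\xi_h^{n+1}-\boldsymbol\xi_h^n\|_{\Omega_b}$, $x_2=\|v_h^{n+1}-v_h^n\|_{\Gamma_+}$, $x_3=\|\boldsymbol u_h^{n+1}-\boldsymbol u_h^n\|_{\Omega_f}$ and the dissipation norms $x_4=\|\boldsymbol\kappa^{-1/2}\boldsymbol u_{b,h}^{n+1}\|_{\Omega_b}$, $x_5=\|(\kappa^p)^{-1/2}u_{p,h}^{n+1}\|_{\Omega_p}$. For $I_1$ I use Cauchy--Schwarz, the Poincaré inequality (to replace $\|p_h^{n+1}\|_{\Omega_b}$ by a constant times $x_4$), and the inverse inequality $\|\nabla\cdot(\boldsymbol\xi_h^{n+1}-\boldsymbol\xi_h^n)\|_{\Omega_b}\le\frac{\sqrt3\,C_{INV}}{h}x_1$, obtaining a term proportional to $\Delta t\,h^{-1}\,x_1x_4$. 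For $I_3$ I write $v_h^{n+1}-v_h^n=(\boldsymbol\xi_h^{n+1}-\boldsymbol\xi_h^n)|_\Gamma\cdot\boldsymbol n^p$, apply the discrete trace--inverse inequalities $x_2\le\frac{C_{TI}}{\sqrt h}x_1$ and $\|\boldsymbol u_h^{n+1}-\boldsymbol u_h^n\|_{\Gamma_-}\le\frac{A_{TI}}{\sqrt h}x_3$, and bound $\|q_h^{n+1}\|_{\Gamma_-}$ via the trace inequality of \cref{thm:Poinc}, whose two summands feed $x_4$ and $x_5$; this produces terms proportional to $\Delta t\,h^{-1/2}\,x_1x_4$, $\Delta t\,h^{-1/2}(H/\kappa^p)^{1/2}\,x_1x_5$, $\Delta t\,h^{-1/2}\,x_3x_4$, $\Delta t\,h^{-1/2}(H/\kappa^p)^{1/2}\,x_3x_5$. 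Moving all the bounds to the left, the inequality becomes $\mathcal E(\boldsymbol A_h^{n+1})+(\text{manifestly nonnegative leftover terms})+\boldsymbol x^{\top}A\boldsymbol x\le\mathcal E(\boldsymbol A_h^{n})$, where $A$ is symmetric $5\times5$ with diagonal $(\rho_b,H\rho_p,\rho_f,2\Delta t,2\Delta t)$ and only the off-diagonal entries $A_{14}$ (a sum of the $I_1$- and an $I_3$-contribution), $A_{15}$, $A_{25}$, $A_{34}$, $A_{35}$ nonzero, so rows $1$ through $5$ have $n_1=2$, $n_2=1$, $n_3=2$, $n_4=2$, $n_5=3$ nonzero off-diagonal entries. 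By \cref{lem:2} and the refinements in the remarks following it (for zero entries and for off-diagonal entries that split into $l$ pieces), $A$ dominates a positive diagonal matrix as soon as $A_{ij,k}^2<\frac{1}{l^2 n_i n_j}A_{ii}A_{jj}$ for each pair and each piece; a direct computation shows these inequalities are exactly the four stated time-step conditions --- the $h^2$ bound from the $I_1$-part of $A_{14}$, the $(h/H)^4$ bound from $A_{25}$, the $h$ bound (with $M=\min\{\rho_b/C_{TI}^2,\rho_f/A_{TI}^2\}$) from the $I_3$-part of $A_{14}$ and from $A_{34}$, and the $h/H$ bound from $A_{15}$ and $A_{35}$. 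Taking $\epsilon_{\boldsymbol\xi},\epsilon_v,\epsilon_{\boldsymbol u},\epsilon_{\boldsymbol u_b},\epsilon_{u_p}$ to be the ratios of the diagonal of the dominated matrix to that of $A$ yields the asserted estimate.

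The main obstacle is the estimate for $I_2$, the ``anisotropic'' plate-coupling error. The energy controls only $\nabla_{\boldsymbol n^p}q$, not $\nabla_{\boldsymbol\tau^p}q$, and no shape regularity is assumed in $\Omega_p$, so $\|\nabla_{\boldsymbol\tau^p}\overline{(z+H/2)q_h^{n+1}}\|_{\Gamma_+}$ cannot be controlled by a transverse inverse inequality; integration by parts in $z$ does not help either, since it produces $\nabla_{\boldsymbol\tau^p}\nabla_{\boldsymbol n^p}q$, and moving the tangential derivative onto $v$ produces $\Delta_{\boldsymbol\tau^p}v$, which is unavailable because the plate is discretized in mixed form with $\Lambda$ as an independent variable. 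The resolution uses the $z$-moment structure underlying \cref{thm:Poinc}: the average $\overline{(z+H/2)q}$ depends only on $\nabla_{\boldsymbol n^p}q$ and is therefore bounded by a constant times $H^{3/2}(\kappa^p)^{-1/2}x_5$, while this moment-average lies in the same surface finite element space on $\Gamma$ as $v_h$, so the $\Gamma_+$ inverse inequality applies to it as well; together with the same inverse inequality for $v_h^{n+1}-v_h^n$ this bounds $I_2$ by a term proportional to $\Delta t\,H^{5/2}\alpha^p B_{INV}^2 h^{-2}(\kappa^p)^{-1/2}\,x_2x_5$, which is the origin of the $B_{INV}^4(h/H)^{-4}$ scaling. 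A secondary, purely organizational, difficulty is that $x_4$ couples to both $x_1$ and $x_3$ and $x_5$ to $x_1,x_2,x_3$, so all four time-step conditions must hold at once; the refined form of \cref{lem:2}, with the counts $n_i$ and the splitting index $l$, is exactly what renders the four bounds compatible.
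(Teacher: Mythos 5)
Your proposal reproduces the paper's argument step for step: the same test functions multiplied by $2\Delta t$, the same polarization identities and skew cancellations, the same identification of the three splitting remainders ($I_1$ from the thick-Biot lag, $I_2$ from the plate lag via the $z$-moment/inverse-inequality pair, and the two $\Gamma_-$ interface terms), the same five-dimensional quadratic-form bookkeeping with the matrix $A$, and the same appeal to \cref{lem:2} and the remarks that follow it (with the $n_i$ counts and the $l=2$ splitting of the $\boldsymbol\xi$--$\boldsymbol u_b$ entry) to read off exactly the four stated time-step restrictions. The bounds you state for each $I_j$ coincide, up to reordering the vector $\vec x$, with those in the paper, so this is essentially the paper's own proof.
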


\begin{proof}

In the following, we take all force terms to be zero and repeatedly use the polarization identity $2a(a-b) = a^2 + (a-b)^2 - b^2$. Setting $(r_h, \boldsymbol U_{b,h}, s_h, U_{p,h}) = 2\Delta t(p_h^{n+1}, \boldsymbol u_{b,h}^{n+1}, q_h^{n+1}, u_{p,h}^{n+1})$ in the weak form of Step 1 we get
\begin{equation*}
\small \begin{split} 
&  c_0\left(||p_h^{n+1}||_{\Omega_b}^2 + ||p_h^{n+1} - p_h^{n}||_{\Omega_b}^2\right)+ c_0^p\left(||q_h^{n+1}||_{\Omega_p}^2 + ||q_h^{n+1} - q_h^{n}||_{\Omega_p}^2\right) + 2\Delta t ||\boldsymbol\kappa^{-1/2}\boldsymbol u_{b,h}^{n+1}||^2_{\Omega_b}\\
&  + 2\Delta t ||(\kappa^p)^{-1/2} u_{p,h}^{n+1}||^2_{\Omega_p} =  c_0||p_h^{n}||_{\Omega_b}^2 + c_0^p||q_h^{n}||_{\Omega_p}^2 -  2\Delta t\alpha(\nabla\cdot\boldsymbol\xi_h^{n},p_h^{n+1})_{\Omega_b} \\  
&-  2\Delta t \left[\alpha^p(\nabla_{\boldsymbol\tau^p}\tilde v_h^{n},\left(z + H/2\right)\nabla_{\boldsymbol\tau^p}q_h^{n+1})_{\Omega_p} - \left(\left.\boldsymbol{u}_h^n\right|_{\Gamma} \cdot \boldsymbol{n}^{p}-\tilde v_h^n,q_h^{n+1}\right)_{\Gamma_-}\right].
\end{split} 
\end{equation*}

In the weak form of Step 2, setting $(\boldsymbol \phi_h, \varphi_h, \psi_h) = 2\Delta t(\boldsymbol\xi_h^{n+1}, v_h^{n+1},\Lambda_h^{n+1})$ we get

\begin{equation*}
\small\begin{split} 
& \rho_b \left(||\boldsymbol\xi_h^{n+1}||_{\Omega_b}^2 +||\boldsymbol\xi_h^{n+1} - \boldsymbol\xi_h^{n}||_{\Omega_b}^2\right) +  \lambda_b\left(||\nabla\cdot\boldsymbol\eta_h^{n+1}||_{\Omega_b}^2 + ||\nabla\cdot(\boldsymbol\eta_h^{n+1} - \boldsymbol\eta_h^{n})||_{\Omega_b}^2\right)   \\  
&+  2\mu_b\left(||\boldsymbol D(\boldsymbol\eta_h^{n+1})||_{\Omega_b}^2 + ||\boldsymbol D(\boldsymbol\eta_h^{n+1} - \boldsymbol\eta_h^{n})||_{\Omega_b}^2\right)  + \gamma\left(|| \boldsymbol\eta_h^{n+1}||_{\Omega_b}^2  + ||\boldsymbol\eta_h^{n+1} - \boldsymbol\eta_h^{n}||_{\Omega_b}^2\right)  \\  
&  + H\rho_p\left(|| v_h^{n+1}||_{\Gamma_+}^2  + ||v_h^{n+1} - v_h^{n}||_{\Gamma_+}^2\right) + H^3D\left(||\Lambda_h^{n+1}||_{\Gamma_+}^2 + ||\Lambda_h^{n+1} - \Lambda_h^n||_{\Gamma_+}^2\right) \\  
& +  H\gamma^p\left(|| w_h^{n+1}||_{\Gamma_+}^2  + ||w_h^{n+1} - w_h^{n}||_{\Gamma_+}^2\right)   = \rho_b||\boldsymbol\xi_h^{n}||_{\Omega_b}^2 + \lambda_b||\nabla\cdot\boldsymbol\eta_h^{n}||_{\Omega_b}^2+ 2\mu_b||\boldsymbol D(\boldsymbol\eta_h^{n})||_{\Omega_b}^2 \\
&   + \gamma||\boldsymbol\eta_h^{n}||_{\Omega_b}^2 + H\rho_p||v_h^n||_{\Gamma_+}^2 + H^3D||\Lambda_h^{n}||_{\Gamma_+}^2 + H\gamma^p||w_h^n||_{\Gamma_+}^2+ 2\alpha \Delta t (p_h^{n+1},\nabla\cdot\boldsymbol\xi_h^{n+1})_{\Omega_b}   \\  
& + 2 \Delta t \left[ H\alpha^p\left( \nabla_{\boldsymbol\tau^p}\overline{(z + H/2)q_h^{n+1}},\nabla_{\boldsymbol\tau^p}v_h^{n+1} \right)_{\Gamma_+}   +   \left(q_h^{n+1} , v_h^{n+1}  \right)_{\Gamma_+}\right].
\end{split} 
\end{equation*}

In the weak form of Step 3, setting $(\boldsymbol U_h, \Pi_h) = 2\Delta t(\boldsymbol u_h^{n+1}, \pi_h^{n+1})$ we get

\begin{equation*}
\begin{split} 
& \rho_f \left(||\boldsymbol u_h^{n+1}||_{\Omega_f}^2 +||\boldsymbol u_h^{n+1} - \boldsymbol u_h^{n}||_{\Omega_f}^2\right) + 2\Delta t\left[2\mu_f||\boldsymbol D(\boldsymbol u_h^{n+1})||_{\Omega_f}^2 + \beta||\boldsymbol P_{\boldsymbol\tau^p}{\boldsymbol u_h}^{n+1}||_{\Gamma_-}^2   \right] \\  
& = \rho_f||\boldsymbol u_h^{n}||_{\Omega_f}^2  - 2\Delta t \left(q_h^{n+1},\left.\boldsymbol{u}_h^{n+1}\right|_\Gamma \cdot \boldsymbol{n}^{p} \right)_{\Gamma_-}.
\end{split} 
\end{equation*}

Setting $\boldsymbol A_h^n := (p_h^{n}, \boldsymbol u_{b,h}^{n}, q_h^{n}, u_{p,h}^{n},\boldsymbol\xi_h^{n}, v_h^{n},\Lambda_h^{n},\boldsymbol u_h^{n}, \pi_h^{n}) \in \mathcal V_h$ and summing yields
\begin{align*} \begin{split} & \mathcal E(\boldsymbol A_h^{n+1}) + \mathcal E(\boldsymbol A_h^{n+1} - \boldsymbol A_h^n) + 2\Delta t \mathcal D(\boldsymbol A_h^{n+1}) =  \mathcal E(\boldsymbol A_h^{n}) \\
& + 2\Delta t\left[\alpha(p_h^{n+1},\nabla\cdot(\boldsymbol\xi_h^{n+1} - \boldsymbol\xi_h^n))_{\Omega_b}  - \left(q_h^{n+1},\left.\left(\boldsymbol u_h^{n+1} - \boldsymbol u_h^n\right)\right|_\Gamma\cdot\boldsymbol n^p\right)_{\Gamma_-} \right] \\
& + 2\Delta t\left[ H\alpha^p(\nabla_{\boldsymbol\tau^p}\overline{(z + H/2)q_h^{n+1}},\nabla_{\boldsymbol\tau^p}( v_h^{n+1} - v_h^{n}))_{\Gamma_+} + \left(q_h^{n+1}  ,v_h^{n+1} - v_h^n\right)_{\Gamma-} \right].
\end{split}  \end{align*}

We thus have four terms we seek to bound. The first term, $(p_h^{n+1},\nabla\cdot(\boldsymbol\xi_h^{n+1} - \boldsymbol\xi_h^n))_{\Omega_b}$, occurs due to the Biot splitting on the thick domain as described in \cite{BiotSplit} and can be bounded by using the inverse and Poincar\'{e}'s inequalities as follows:
\begin{equation*} \begin{split} \left|\alpha\Delta t(p_h^{n+1},\nabla\cdot(\boldsymbol\xi_h^{n+1} - \boldsymbol\xi_h^n))_{\Omega_b}\right| & \leq \alpha\Delta t|| p_h^{n+1}||_{\Omega_b}||\nabla\cdot(\boldsymbol\xi_h^{n+1} - \boldsymbol\xi_h^{n})||_{\Omega_b} \\ 
&\leq \frac{\alpha C_{PF}C_{INV}\Delta t\sqrt{3}}{h\sqrt{k_{min}}}||\boldsymbol\kappa^{-1/2}\boldsymbol u_{b,h}^{n+1}||_{\Omega_b}||\boldsymbol\xi_h^{n+1} - \boldsymbol\xi_h^{n}||_{\Omega_b} .
\end{split} \end{equation*}

Note that $||\boldsymbol\kappa^{-1/2}\boldsymbol u_{b,h}^{n+1}||^2$ appears in the term $2\Delta t \mathcal D(\boldsymbol A_h^{n+1})$ with a coefficient of $2\Delta t$ and $||\boldsymbol\xi_h^{n+1} - \boldsymbol\xi_h^{n}||_{\Omega_b}^2$ appears in $ \mathcal E(\boldsymbol A_h^{n+1}-\boldsymbol A_h^{n})$ with coefficient $\rho_b$. The term $(\nabla_{\boldsymbol\tau^p}\overline{(z + H/2)q_h^{n+1}},\nabla_{\boldsymbol\tau^p}( v_h^{n+1} - v_h^{n}))_{\Gamma_+}$ arises in a similar manner from the Biot splitting of the poroelastic plate model and is handled analogously. We first have
\begin{equation*} \small \begin{split} \left|(\nabla_{\boldsymbol\tau^p}\overline{(z + H/2)q_h^{n+1}},\nabla_{\boldsymbol\tau^p}( v_h^{n+1} - v_h^{n}))_{\Gamma_+}\right|
&  \leq  \left|\left| \nabla_{\boldsymbol\tau^p}\overline{(z + H/2)q_h^{n+1}}\right|\right|_{\Gamma_+}||\nabla_{\boldsymbol\tau^p}( v_h^{n+1} -  v_h^{n})||_{\Gamma_+}\\
&  \leq \frac{B_{INV}^2}{h^2}\left|\left|  \overline{(z + H/2)q_h^{n+1}}\right|\right|_{\Gamma_+}|| v_h^{n+1} - v_h^{n}||_{\Gamma_+} \end{split} \end{equation*}
from the inverse inequality, which after using the average inequality gives the bound
\begin{equation*} \small \begin{split} \left|H\alpha^p\Delta t(\nabla_{\boldsymbol\tau^p}\overline{(z+H/2)q_h^{n+1}},\nabla_{\boldsymbol\tau^p}( v_h^{n+1} - v_h^{n}))_{\Gamma_+}\right| &
 \\
\leq  \frac{H^{5/2}\alpha^pB_{INV}^2\Delta t}{8h^2\sqrt{\kappa^p}} &||(\kappa^p)^{-1/2}u_{p,h}^{n+1}||_{\Omega_p}|| v_h^{n+1} - v_h^{n}||_{\Gamma_+}.
\end{split} \end{equation*}
We also see that  $||(\kappa^p)^{-1/2}u_{p,h}^{n+1}||_{\Omega_p}^2$ appears in the term $2\Delta t \mathcal D(\boldsymbol A_h^{n+1})$ with a coefficient of $2\Delta t$ and $||v_h^{n+1} - v_h^{n}||_{\Gamma_+}^2$ appears in $ \mathcal E(\boldsymbol A_h^{n+1}-\boldsymbol A_h^{n})$ with coefficient $H\rho_p$.

 The remaining boundary terms occur due to the splitting between the poroelastic domain and the fluid domain as seen in the Stokes-Darcy splitting methods of \cite{StokesDarcyMethods}. The first is bounded by
\begin{equation*} \begin{split} \left|\Delta t \left(q_h^{n+1} ,v_h^{n+1} - v_h^n\right)_{\Gamma_+}\right| 
 \leq & \frac{C_{TI}C_{TR}\Delta t\sqrt{2}}{\sqrt{hk_{min}}}||\boldsymbol\kappa^{-1/2}\boldsymbol u_{b,h}^{n+1}||_{\Omega_b}|| \boldsymbol\xi_h^{n+1} - \boldsymbol\xi_h^{n}||_{\Omega_b} \\
& +   \frac{C_{TI}\sqrt{2H}\Delta t}{\sqrt{h\kappa^p}}||(\kappa^p)^{-1/2}u_{p,h}^{n+1}||_{\Omega_p}|| \boldsymbol\xi_h^{n+1} - \boldsymbol\xi_h^{n}||_{\Omega_b} 
\end{split} \end{equation*}
using the trace inequality for $q_h$ and the discrete trace inverse inequality for $\boldsymbol \xi_h$, and the second by
\begin{equation*} \small\begin{split} \left|\Delta t \left(q_h^{n+1} ,\left.\left(\boldsymbol u_h^{n+1} - \boldsymbol u_h^n\right)\right|_\Gamma\cdot\boldsymbol n^p\right)_{\Gamma_-}\right| 
 \leq &\frac{A_{TI}C_{TR}\Delta t\sqrt{2}}{\sqrt{hk_{min}}}||\boldsymbol\kappa^{-1/2}\boldsymbol u_{b,h}^{n+1}||_{\Omega_b}\left|\left|\boldsymbol u_h^{n+1} - \boldsymbol u_h^{n}\right|\right|_{\Omega_f}  \\
& +  \frac{A_{TI}\sqrt{2H}\Delta t}{\sqrt{h\kappa^p}}||(\kappa^p)^{-1/2}u_{p,h}^{n+1}||_{\Omega_p}\left|\left|\boldsymbol u_h^{n+1} - \boldsymbol u_h^{n}\right|\right|_{\Omega_f}
\end{split}  \end{equation*}
using the trace inequality for $q_h$ and the discrete trace inverse inequality for $\boldsymbol u$. Note that $\left|\left|\boldsymbol u_h^{n+1} - \boldsymbol u_h^{n}\right|\right|_{\Omega_f}^2$ appears in the term $\mathcal E(\boldsymbol A_h^{n+1}-\boldsymbol A_h^{n})$ with coefficient $\rho_f$.

We can now input all of our inequalities in the following matrix representation of our bilinear form on the variable
\begin{equation*} \scriptsize \vec x = \left(\begin{matrix} ||\boldsymbol\kappa^{-1/2}\boldsymbol u_{b,h}^{n+1}||_{\Omega_b} & ||\boldsymbol\xi_h^{n+1} - \boldsymbol\xi_h^{n}||_{\Omega_b}  & ||(\kappa^p)^{-1/2}u_{p,h}||_{\Omega_p} & || v_h^{n+1} - v_h^{n}||_{\Gamma} & \left|\left|\boldsymbol u_h^{n+1} - \boldsymbol u_h^{n}\right|\right|_{\Omega_f} \end{matrix}\right)^T \end{equation*} 
as the symmetric $5\times 5$ matrix
\begin{equation*}\footnotesize \left(\begin{matrix}
2\Delta t & \frac{\alpha C_{PF}C_{INV}\Delta t\sqrt{3}}{h\sqrt{k_{min}}} + \frac{C_{TI}C_{TR}\Delta t\sqrt{2}}{\sqrt{hk_{min}}}  & 0 &  0 & \frac{A_{TI}C_{TR}\Delta t\sqrt{2}}{\sqrt{hk_{min}}} \\
& \rho_b &\frac{C_{TI}\sqrt{2H}\Delta t}{\sqrt{h\kappa^p}}  & 0 & 0 \\
 &  & 2\Delta t &  \frac{H^{5/2}\alpha^pB_{INV}^2\Delta t}{8h^2\sqrt{\kappa^p}} &  \frac{A_{TI}\sqrt{2H}\Delta t}{\sqrt{h\kappa^p}} \\ 
 &  &  & H\rho_p & 0 \\
 &  & & & \rho_f
\end{matrix}\right). \end{equation*} 
Using \cref{lem:2} and bounding each of the terms appearing off the diagonal yields the time-step condition above as a sufficient condition for stability.
\end{proof}

The squared $h$ term in the above stability condition appears as a result of the Biot splitting of $\boldsymbol\xi$ and $p$ in the thick domain as seen in \cite{BiotSplit}. The corresponding term arising from the Biot splitting in the thin domain contains $\left(h/H\right)^4$; the higher power is a result of the 4th order plate dynamics, and we see that this condition is more easily satisfied for smaller $H$. However, we can reduce the time-step condition to being linear in $h$ if the multiphysics problem satisfies certain parameter constraints, as we now show.

\begin{theorem}\label{thm:stab2}
Assume that the fluid-multilayered poroelastic system is isolated, that is, $\boldsymbol F_b = 0, G_b = 0, G_m = 0$, and $\boldsymbol F_f =  0$. Suppose the parameter conditions $\alpha^2 < c_0\lambda_b$ and $(\alpha^p)^2 < 12c_0^pD$ are satisfied and let $M = \min\left\{\frac{\rho_b}{C_{TI}^2},\frac{\rho_f}{A_{TI}^2}\right\}$. Then under the time-step condition
\begin{equation*}  \Delta t < \frac{M}{4}\cdot\min\left\{\frac{k_{min}}{C_{TR}^2}h,\kappa^p\left(\frac{h}{H}\right)\right\}, \end{equation*} 
the following {{stability}} estimate holds for some constants $\epsilon_{\boldsymbol\xi}, \epsilon_{\boldsymbol u}, \epsilon_{\boldsymbol u_b}, \epsilon_{u_m}, \epsilon_{p}, \epsilon_{q}, \epsilon_{\boldsymbol\eta},\epsilon_{\Lambda} \in (0,1)$:

\begin{align*} \footnotesize\begin{split} 
& \rho_b||\boldsymbol\xi_h^N||^2_{\Omega_b} + \epsilon_{\boldsymbol\eta}\lambda_b||\nabla\cdot\boldsymbol\eta_h^N||^2_{\Omega_b}+ 2\mu_b||\boldsymbol D(\boldsymbol \eta_h^N)||^2_{\Omega_b} +  \gamma||\boldsymbol\eta_h^N||^2_{\Omega_b}  + H\rho_p||v_h^N||^2_{\Gamma_+} + H\gamma^p||w_h^N||^2_{\Gamma_+}  \\
& + \epsilon_{\Lambda}H^3D||\Lambda_h^N||^2_{\Gamma_+}+  \epsilon_{p}c_0||p_h^0||^2_{\Omega_b} + \epsilon_{q}c_0^p||q_h^N||^2_{\Omega_p} + \rho_f||\boldsymbol u_h^N||^2_{\Omega_f} + \epsilon_{\boldsymbol\xi}\rho_b\sum_{n=0}^{N-1}||\boldsymbol\xi_h^{n+1} - \boldsymbol\xi_h^{n}||^2_{\Omega_b}  \\
& + \epsilon_{\boldsymbol\eta}\lambda_b\sum_{n=0}^{N-1}||\nabla\cdot(\boldsymbol\eta_h^{n}-\boldsymbol\eta_h^{n-1})||^2_{\Omega_b} + 2\mu_b\sum_{n=0}^{N-1}||\boldsymbol D(\boldsymbol\eta_h^{n+1}-\boldsymbol\eta_h^{n})||^2_{\Omega_b} +  \gamma\sum_{n=0}^{N-1}||\boldsymbol\eta_h^{n+1}-\boldsymbol\eta_h^{n}||^2_{\Omega_b} \\
&  + \rho_p\sum_{n=0}^{N-1}||v_h^{n+1} - v_h^n||^2_{\Gamma_+} + \epsilon_{\Lambda}H^3D\sum_{n=0}^{N-1}||\Lambda_h^{n} - \Lambda_h^{n-1}||^2_{\Gamma_+} + H\gamma^pc||w_h^{n+1}-w_h^n||^2_{\Gamma_+} \\
&  +  \epsilon_{p}c_0\sum_{n=0}^{N-1}||p_h^{n+1}-p_h^n||^2_{\Omega_b} + \epsilon_{q}c_0^p\sum_{n=0}^{N-1}||q_h^{n+1}-q_h^n||^2_{\Omega_p} + \epsilon_{\boldsymbol u_b}2\Delta t\sum_{n=0}^{N-1}||\boldsymbol\kappa^{-1/2}\boldsymbol u_{b,h}^{n+1}||^2_{\Omega_b} \\
& + \epsilon_{\boldsymbol u}\rho_f\sum_{n=0}^{N-1}||\boldsymbol u_h^{n+1}-\boldsymbol u_h^{n}||^2_{\Omega_f}  + \epsilon_{u_m}2\Delta t\sum_{n=0}^{N-1}||(\kappa^p)^{-1/2} u_{p,h}^{n+1}||^2_{\Omega_p}+ 4\Delta t \mu_f\sum_{n=0}^{N-1}||\boldsymbol D (\boldsymbol u_h^{n+1})||^2_{\Omega_f} \\
&  + 2\beta\Delta t\sum_{n=0}^{N-1} \left|\left|\boldsymbol P_{\boldsymbol\tau^p}\boldsymbol{u}_h^{n+1}\right|\right|^2_{\Gamma_-} \leq  \rho_b||\boldsymbol\xi_h^0||^2_{\Omega_b} + 2\lambda_b||\nabla\cdot\boldsymbol\eta_h^0||^2_{\Omega_b} + 2\mu_b||\boldsymbol D(\boldsymbol \eta_h^0)||^2_{\Omega_b} +  \gamma||\boldsymbol\eta_h^0||^2_{\Omega_b} \\
&  + H\rho_p||v_h^0||^2_{\Gamma_+}+ 2H^3D||\Lambda_h^0||^2_{\Gamma_+} + H\gamma^p||w_h^0||^2_{\Gamma_+} +  2c_0||p_h^0||^2_{\Omega_b} + 2c_0^p||q_h^0||^2_{\Omega_p} + \rho_f||\boldsymbol u_h^0||^2_{\Omega_f}. 
\end{split}  \end{align*}
\end{theorem}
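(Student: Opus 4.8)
The plan is to begin from the same per-step energy identity that drives the proof of \cref{thm:stab1}, but to control the two fixed-strain Biot-splitting error terms by a discrete integration-by-parts (summation-by-parts) argument in time rather than by spatial inverse inequalities. The parameter conditions $\alpha^2<c_0\lambda_b$ and $(\alpha^p)^2<12c_0^pD$ are precisely what makes that argument close with no restriction on $\Delta t$, so that only the two Stokes--Biot interface error terms remain to produce the stated linear-in-$h$ condition; then summing over all time levels and telescoping gives the asserted long-time estimate.

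First I would test the weak forms of Steps~1--3 with $2\Delta t$ times the respective discrete solutions; as in the proof of \cref{thm:stab1} this yields, for each $n$,
\begin{equation*}
\mathcal E(\boldsymbol A_h^{n+1})+\mathcal E(\boldsymbol A_h^{n+1}-\boldsymbol A_h^n)+2\Delta t\,\mathcal D(\boldsymbol A_h^{n+1})=\mathcal E(\boldsymbol A_h^n)+R_1^n+R_2^n+R_3^n+R_4^n,
\end{equation*}
with $R_1^n=2\Delta t\,\alpha(p_h^{n+1},\nabla\cdot(\boldsymbol\xi_h^{n+1}-\boldsymbol\xi_h^n))_{\Omega_b}$ and $R_2^n=2\Delta t\,H\alpha^p(\nabla_{\boldsymbol\tau^p}\overline{(z+H/2)q_h^{n+1}},\nabla_{\boldsymbol\tau^p}(v_h^{n+1}-v_h^n))_{\Gamma_+}$ the Biot-splitting errors in the thick and thin structures, and $R_3^n,R_4^n$ the interface coupling errors on $\Gamma_-$ and $\Gamma_+$. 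I would then sum over $n=0,\dots,N-1$ and telescope the energy, so that the left side becomes $\mathcal E(\boldsymbol A_h^N)+\sum_n\mathcal E(\boldsymbol A_h^{n+1}-\boldsymbol A_h^n)+2\Delta t\sum_n\mathcal D(\boldsymbol A_h^{n+1})$ and the right side $\mathcal E(\boldsymbol A_h^0)+\sum_n(R_1^n+R_2^n+R_3^n+R_4^n)$. The interface terms $R_3^n,R_4^n$ I would estimate exactly as in \cref{thm:stab1}: \cref{thm:Poinc} replaces $q_h^{n+1}$ by $\|\boldsymbol\kappa^{-1/2}\boldsymbol u_{b,h}^{n+1}\|_{\Omega_b}$ and $\|(\kappa^p)^{-1/2}u_{p,h}^{n+1}\|_{\Omega_p}$, and the discrete trace-inverse inequalities, together with $v_h^{n+1}=\left.\boldsymbol\xi_h^{n+1}\right|_{\Gamma_+}\!\cdot\boldsymbol n^p$, replace the velocity jumps by $\|\boldsymbol\xi_h^{n+1}-\boldsymbol\xi_h^n\|_{\Omega_b}$ and $\|\boldsymbol u_h^{n+1}-\boldsymbol u_h^n\|_{\Omega_f}$. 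The difference from \cref{thm:stab1} is that these products now have to be absorbed only into the $\boldsymbol u_b,u_p$ parts of $2\Delta t\,\mathcal D(\boldsymbol A_h^{n+1})$ and the $\rho_b,\rho_f$ parts of $\mathcal E(\boldsymbol A_h^{n+1}-\boldsymbol A_h^n)$, with no sharing against $R_1^n,R_2^n$; applying \cref{lem:2} to the resulting symmetric $4\times4$ matrix, whose four nonzero off-diagonal entries each come from a single term, gives precisely $\Delta t<\frac{M}{4}\min\{k_{min}h/C_{TR}^2,\ \kappa^p h/H\}$ (the $\tfrac14$ in place of the $\tfrac1{16},\tfrac16$ of \cref{thm:stab1} being exactly what the reduced coupling buys).

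The core is $R_1^n$ and $R_2^n$. Writing $\boldsymbol\xi_h^{n+1}-\boldsymbol\xi_h^n=\frac1{\Delta t}(\boldsymbol d_h^n-\boldsymbol d_h^{n-1})$ with $\boldsymbol d_h^n:=\boldsymbol\eta_h^{n+1}-\boldsymbol\eta_h^n$ cancels the $\Delta t$ and turns $\sum_nR_1^n$ into $\sum_n 2\alpha(p_h^{n+1},\nabla\cdot(\boldsymbol d_h^n-\boldsymbol d_h^{n-1}))_{\Omega_b}$; summation by parts in $n$ leaves a telescoping boundary term $2\alpha(p_h^N,\nabla\cdot\boldsymbol d_h^{N-1})_{\Omega_b}$, an initial term fixed by the data, and $-\sum_n 2\alpha(p_h^{n+1}-p_h^n,\nabla\cdot\boldsymbol d_h^{n-1})_{\Omega_b}$, and splitting each strain increment as $\nabla\cdot\boldsymbol\eta_h^{n}-\nabla\cdot\boldsymbol\eta_h^{n-1}$ and applying Young's inequality (\cref{lem:1}) absorbs all of these into the $c_0\|p_h^N\|^2$, $c_0\|p_h^{n+1}-p_h^n\|^2$ and $\lambda_b\|\nabla\cdot\boldsymbol\eta_h^{\bullet}\|^2$ contributions exactly because $\alpha^2<c_0\lambda_b$ — with no condition on $\Delta t$ — the initial term supplying the extra $\lambda_b\|\nabla\cdot\boldsymbol\eta_h^0\|^2$ and $c_0\|p_h^0\|^2$ on the right (the factors of $2$). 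For $R_2^n$ I would likewise write $v_h^{n+1}-v_h^n=\frac1{\Delta t}(w_h^{n+1}-2w_h^n+w_h^{n-1})$ and then integrate by parts on $\Gamma_+$: using the clamped condition \textbf{(B2)} and the discrete relation $(\Lambda_h^{k+1},\psi)_{\Gamma_+}=(\nabla_{\boldsymbol\tau^p}w_h^{k+1},\nabla_{\boldsymbol\tau^p}\psi)_{\Gamma_+}$ (propagated from the initialization $\Lambda_h^0=-\Delta_{\boldsymbol\tau^p}w_h^0$), the uncontrolled $\nabla_{\boldsymbol\tau^p}q_h$ is traded for the controlled $\overline{(z+H/2)q_h}$, giving $\sum_nR_2^n=\sum_n 2H\alpha^p(\overline{(z+H/2)q_h^{n+1}},\Lambda_h^{n+1}-2\Lambda_h^n+\Lambda_h^{n-1})_{\Gamma_+}$. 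Summation by parts and Young's inequality, now with the bound $\|\overline{(z+H/2)q_h}\|_{\Gamma_+}^2\le\frac H{12}\|q_h\|_{\Omega_p}^2$ obtained from the computation in \cref{thm:Poinc}, absorb this into the $c_0^p\|q_h\|^2$ and $H^3D\|\Lambda_h\|^2$ contributions precisely when $(\alpha^p)^2/(6D)<2c_0^p$, i.e.\ $(\alpha^p)^2<12c_0^pD$, again with no restriction on $\Delta t$.

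Collecting everything, the coefficients surviving on the left are strictly positive fractions $\epsilon_\bullet\in(0,1)$ of the energy terms that were drawn on, while the right side is $\mathcal E(\boldsymbol A_h^0)$ plus the initial-data terms from the two summation-by-parts arguments, which is exactly the asserted estimate. I expect the genuine obstacle to be the $R_2^n$ step: making the tangential integration by parts on $\Gamma_+$ rigorous in the discrete setting — which is why the $\Lambda_h$--$w_h$ relation enters and why one wants $\overline{(z+H/2)q_h}\in\mathcal W_h^s$ — and then choosing the Young's-inequality weights so that the several absorptions that all draw on the same $c_0^p\|q_h^{n+1}-q_h^n\|^2$ and $H^3D\|\Lambda_h^{n+1}-\Lambda_h^n\|^2$ budget close simultaneously under the single constraint $(\alpha^p)^2<12c_0^pD$; the thick-domain term $R_1^n$ is the same argument with the easier isotropic pressure, and $R_3^n,R_4^n$ are essentially copied from \cref{thm:stab1}.
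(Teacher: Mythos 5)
Your proposal follows essentially the same route as the paper's proof: you test Steps 1--3 as in Theorem 5.4, absorb the two Stokes--Biot interface terms through the same symmetric $4\times 4$ matrix and Lemma 5.2 to obtain the linear-in-$h$ time-step condition, and handle the two fixed-strain Biot-splitting terms by the same Abel/telescoping decomposition in time closed by Lemma 5.1 under the parameter hypotheses $\alpha^2<c_0\lambda_b$ and $(\alpha^p)^2<12c_0^pD$. The only difference is bookkeeping --- the paper writes a per-$n$ algebraic identity and collects the telescoping pieces into a quantity $\mathcal C(\boldsymbol A_h^{n+1})$ before summing, whereas you sum first and then sum by parts --- and you are right to flag the one delicate step, namely that trading $\nabla_{\boldsymbol\tau^p}\overline{(z+H/2)q_h}$ paired with $\nabla_{\boldsymbol\tau^p}(v_h^{n+1}-v_h^n)$ for second differences of $\Lambda_h$ relies on the discrete $\Lambda_h$--$w_h$ relation and (implicitly) on $\overline{(z+H/2)q_h}$ being an admissible test function in $\mathcal W_h^s$, a point the paper's proof does not make explicit.
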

\begin{proof}
The proof begins identically to that of \cref{thm:stab1}, except we leave the Biot splitting terms as they are and continue to bound the other two boundary terms arising from the splitting scheme in the same manner as before. This leads to the representation of the mixed and related terms in our inequality as a bilinear form on the variable
 \begin{equation*}  \vec x = \left(\begin{matrix} ||\boldsymbol\kappa^{-1/2}\boldsymbol u_{b,h}^{n+1}||_{\Omega_b} & ||\boldsymbol\xi_h^{n+1} - \boldsymbol\xi_h^{n}||_{\Omega_b}  & ||(\kappa^p)^{-1/2}u_{p,h}||_{\Omega_p} & \left|\left|\boldsymbol u_h^{n+1} - \boldsymbol u_h^{n}\right|\right|_{\Omega_f} \end{matrix}\right)^T \end{equation*} 
{{represented by}} the symmetric $4\times 4$ matrix
\begin{equation*}  \left(\begin{matrix}
2\Delta t & \frac{C_{TI}C_{TR}\Delta t\sqrt{2}}{\sqrt{hk_{min}}}  & 0  & \frac{A_{TI}C_{TR}\Delta t\sqrt{2}}{\sqrt{hk_{min}}} \\
& \rho_b &\frac{C_{TI}\sqrt{2H}\Delta t}{\sqrt{h\kappa^p}}  & 0 \\
 &  & 2\Delta t &  \frac{A_{TI}\sqrt{2H}\Delta t}{\sqrt{h\kappa^p}} \\ 
 &  &   & 0 \\
 &  &  & \rho_f
\end{matrix}\right). \end{equation*} 
Using \cref{lem:2} and bounding each of the terms appearing off the diagonal then yields
\begin{equation*} \small\begin{split} & \mathcal E(\boldsymbol A_h^{n+1}) + \epsilon_{\boldsymbol\xi}\rho_b||\boldsymbol\xi_h^{n+1} - \boldsymbol\xi_h^{n}||^2_{\Omega_b} + \lambda_b||\nabla\cdot(\boldsymbol\eta_h^{n+1}-\boldsymbol\eta_h^{n})||^2_{\Omega_b} + 2\mu_b||\boldsymbol D(\boldsymbol\eta_h^{n+1}-\boldsymbol\eta_h^{n})||^2_{\Omega_b} \\
& +  \gamma||\boldsymbol\eta_h^{n+1}-\boldsymbol\eta_h^{n}||^2_{\Omega_b}  + \rho_p||v_h^{n+1} - v_h^n||^2_{\Gamma_+} + H^3D||\Lambda_h^{n+1} - \Lambda_h^n||^2_{\Gamma_+} + H\gamma^p||w_h^{n+1}-w_h^n||^2_{\Gamma_+} \\
& +  c_0||p_h^{n+1}-p_h^n||^2_{\Omega_b} + c_0^p||q_h^{n+1}-q_h^n||^2_{\Omega_p} + \epsilon_{\boldsymbol u}\rho_f||\boldsymbol u_h^{n+1}-\boldsymbol u_h^{n}||^2_{\Omega_f} + \epsilon_{\boldsymbol u_b}2\Delta t||\boldsymbol\kappa^{-1/2}\boldsymbol u_{b,h}^{n+1}||^2_{\Omega_b} \\
&+ \epsilon_{u_p}2\Delta t||(\kappa^p)^{-1/2} u_{p,h}^{n+1}||^2_{\Omega_p} + 4\Delta t \mu_f||\boldsymbol D (\boldsymbol u_h^{n+1})||^2_{\Omega_f} + 2\beta\Delta t \left|\left|\boldsymbol P_{\boldsymbol\tau^p}\boldsymbol{u}_h^{n+1}\right|\right|^2_{\Gamma_-} \leq  \mathcal E(\boldsymbol A_h^{n}) \\
& + 2\alpha\Delta t(p_h^{n+1},\nabla\cdot(\boldsymbol\xi_h^{n+1} - \boldsymbol\xi_h^n))_{\Omega_b}  + 2H\alpha^p\Delta t(\nabla_{\boldsymbol\tau^p}\overline{(z + H/2)q_h^{n+1}},\nabla_{\boldsymbol\tau^p}( v_h^{n+1} - v_h^{n}))_{\Gamma_+}
\end{split}  \end{equation*} 
for some $\epsilon_{\boldsymbol\xi}, \epsilon_{\boldsymbol u}, \epsilon_{\boldsymbol u_b}, \epsilon_{u_p} \in (0,1)$ under the condition 
\begin{equation*}  \Delta t < \frac{1}{4}\min\left\{\frac{\rho_bk_{min}}{C_{TI}^2C_{TR}^2}h,\frac{\rho_fk_{min}}{A_{TI}^2C_{TR}^2}h,\frac{\rho_b\kappa^p}{C_{TI}^2}\left(\frac{h}{H}\right),\frac{\rho_f\kappa^p}{A_{TI}^2}\left(\frac{h}{H}\right)\right\}. \end{equation*} 
Letting $M = \min\left\{\frac{\rho_b}{C_{TI}^2},\frac{\rho_f}{A_{TI}^2}\right\}$ and factoring, this is precisely the same CFL condition as in the statement of the theorem. In order to bound the remaining coupling terms, we first write
\begin{equation*}\small \begin{split} 
& \Delta t(p_h^{n+1},\nabla\cdot(\boldsymbol\xi_h^{n+1} - \boldsymbol\xi_h^n))_{\Omega_b} \\
& = \left(p_h^{n+1},\nabla\cdot\left[(\boldsymbol\eta_h^{n+1} - \boldsymbol\eta_h^n) - (\boldsymbol\eta_h^{n} - \boldsymbol\eta_h^{n-1})\right]\right)_{\Omega_b} \\
& = (p_h^{n+1},\nabla\cdot(\boldsymbol\eta_h^{n+1} - \boldsymbol\eta_h^n))_{\Omega_b} - (p_h^{n},\nabla\cdot(\boldsymbol\eta_h^{n} - \boldsymbol\eta_h^{n-1}))_{\Omega_b} - (p_h^{n+1} - p_h^n,\nabla\cdot(\boldsymbol\eta_h^{n} - \boldsymbol\eta_h^{n-1}))_{\Omega_b}
\end{split} \end{equation*}
and similarly 
\begin{equation*}\small \begin{split}  \Delta t &(\nabla_{\boldsymbol\tau^p}\overline{(z + H/2)q_h^{n+1}},\nabla_{\boldsymbol\tau^p}( v_h^{n+1} - v_h^{n}))_{\Gamma_+} \\
 =& -(\overline{(z + H/2)q_h^{n+1}},(\Lambda_h^{n+1} - \Lambda_h^{n})-(\Lambda_h^{n} - \Lambda_h^{n-1}))_{\Gamma_+} \\
 =&-(\overline{(z + H/2)q_h^{n+1}},\Lambda_h^{n+1} - \Lambda_h^{n})_{\Gamma_+} \\
& + (\overline{(z + H/2)q_h^{n}},\Lambda_h^{n} - \Lambda_h^{n-1})_{\Gamma_+} + (\overline{(z + H/2)(q_h^{n+1} - q_h^{n})},\Lambda_h^{n} - \Lambda_h^{n-1})_{\Gamma_+}.
\end{split} \end{equation*}
Letting 
\begin{equation*} \begin{split} \mathcal C(\boldsymbol A_h^{n+1}) 
& = \lambda_b||\nabla\cdot(\boldsymbol\eta_h^{n+1}-\boldsymbol\eta_h^{n})||^2_{\Omega_b} + H^3D||\Lambda_h^{n+1}- \Lambda_h^n||^2_{\Gamma_+}  \\
& - 2\alpha(p_h^{n+1},\nabla\cdot(\boldsymbol\eta_h^{n+1} - \boldsymbol\eta_h^n))_{\Omega_b} + 2H\alpha^p(\overline{(z + H/2)q_h^{n+1}},\Lambda_h^{n+1} - \Lambda_h^{n})_{\Gamma_+},
\end{split} \end{equation*}
we arrive at the estimate
\begin{equation*}\begin{split} & \mathcal E(\boldsymbol A_h^{n+1}) +  \mathcal C(\boldsymbol A_h^{n+1}) + \epsilon_{\boldsymbol\xi}\rho_b||\boldsymbol\xi_h^{n+1} - \boldsymbol\xi_h^{n}||^2_{\Omega_b} + \lambda_b||\nabla\cdot(\boldsymbol\eta_h^{n}-\boldsymbol\eta_h^{n-1})||^2_{\Omega_b} \\
&  + 2\mu_b||\boldsymbol D(\boldsymbol\eta_h^{n+1}-\boldsymbol\eta_h^{n})||^2_{\Omega_b} +  \gamma||\boldsymbol\eta_h^{n+1}-\boldsymbol\eta_h^{n}||^2_{\Omega_b}  + \rho_p||v_h^{n+1} - v_h^n||^2_{\Gamma_+} +  c_0||p_h^{n+1}-p_h^n||^2_{\Omega_b}    \\
&+ H^3D||\Lambda_h^{n} - \Lambda_h^{n-1}||^2_{\Gamma_+}+ H\gamma^p||w_h^{n+1}-w_h^n||^2_{\Gamma_+} + c_0^p||q_h^{n+1}-q_h^n||^2_{\Omega_p} \\
&+ \epsilon_{\boldsymbol u}\rho_f||\boldsymbol u_h^{n+1}-\boldsymbol u_h^{n}||^2_{\Omega_f} + \epsilon_{\boldsymbol u_b}2\Delta t||\boldsymbol\kappa^{-1/2}\boldsymbol u_{b,h}^{n+1}||^2_{\Omega_b}  + \epsilon_{u_p}2\Delta t||(\kappa^p)^{-1/2} u_{p,h}^{n+1}||^2_{\Omega_p}  \\
& + 4\Delta t \mu_f||\boldsymbol D (\boldsymbol u_h^{n+1})||^2_{\Omega_f} + 2\beta\Delta t \left|\left|\boldsymbol P_{\boldsymbol\tau^p}\boldsymbol{u}_h^{n+1}\right|\right|^2_{\Gamma_-} \leq  \mathcal E(\boldsymbol A_h^{n}) + \mathcal C(\boldsymbol A_h^{n}) \\
& + 2H\alpha^p(\overline{(z + H/2)(q_h^{n+1} - q_h^{n})},\Lambda_h^{n} - \Lambda_h^{n-1})_{\Gamma_+}- 2\alpha(p_h^{n+1} - p_h^n,\nabla\cdot(\boldsymbol\eta_h^{n} - \boldsymbol\eta_h^{n-1}))_{\Omega_b}  .
\end{split}  \end{equation*}
Using Cauchy–Schwarz, the first coupling term is bounded by
\begin{equation*}  \left|2\alpha(p_h^{n+1} - p_h^n,\nabla\cdot(\boldsymbol\eta_h^{n} - \boldsymbol\eta_h^{n-1}))_{\Omega_b}\right| \leq 2\alpha||p_h^{n+1}-p_h^n||_{\Omega_b}||\nabla\cdot(\boldsymbol\eta_h^{n}-\boldsymbol\eta_h^{n-1})||_{\Omega_b}  \end{equation*} 
where $||p_h^{n+1}-p_h^n||_{\Omega_b}$ appears on the left-hand side with coefficient $c_0$ and $||\nabla\cdot(\boldsymbol\eta_h^{n}-\boldsymbol\eta_h^{n-1})||_{\Omega_b}$ with coefficient $\lambda_b$. Thus by \cref{lem:1}, the coupling term can absorbed into the left side if  $\alpha^2 < c_0\lambda$, which becomes our first parameter condition. To similarly handle the second coupling term, we first use the bound
\begin{equation*} \small \left|H\overline{\left(z+H/2\right)q}\right|^2 = \left|\int_{-H}^0\left(z + \frac{H}{2}\right)q \ dz \right|^2 \leq\int_{-H}^0\left(z + \frac{H}{2}\right)^2 \ dz\int_{-H}^0q^2 \ dz = \frac{H^3}{12}\int_{-H}^0q^2 \ dz  \end{equation*}
which after integrating over $x$ and $y$ yields $H\left|\left|\overline{\left(z+H/2\right)q}\right|\right|_{\Gamma_+} \leq \frac{H^{3/2}}{\sqrt{12}}||q||_{\Omega_p}$. This combined with Cauchy–Schwarz gives
 \begin{equation*} \small \left|2H\alpha^p(\overline{(z + H/2)(q_h^{n+1} - q_h^{n})},\Lambda_h^{n} - \Lambda_h^{n-1})_{\Gamma_+}\right| \leq \frac{2\alpha H^{3/2}}{\sqrt{12}}||q_h^{n+1}-q_h^n||_{\Omega_p}||\Lambda_h^{n} - \Lambda_h^{n-1}||_{\Gamma_+}  \end{equation*} 
where $||q_h^{n+1}-q_h^n||_{\Omega_p}$ has coefficient $c_0^p$ in the estimate and $||\Lambda_h^{n} - \Lambda_h^{n-1}||_{\Gamma_+}$ has coefficient $H^3D$. Again by \cref{lem:1}, the coupling term is absorbed under the parameter condition $\frac{H^3(\alpha^p)^2}{12} < H^3c_0^pD$. If we collect all the resulting terms on the left-hand side besides $\mathcal E(\boldsymbol A_h^{n+1})$ and $\mathcal C(\boldsymbol A_h^{n+1})$ in a total dissipation term $\mathcal N(\boldsymbol A_h^{n+1})$, summing over $0 \leq n \leq N-1$ gives
\begin{equation*}   \mathcal E(\boldsymbol A_h^{N}) + \mathcal C(\boldsymbol A_h^{N}) + \sum_{n=0}^{N-1}\mathcal N(\boldsymbol A_h^{n+1}) \leq \mathcal E(\boldsymbol A_h^{0}) + \mathcal C(\boldsymbol A_h^{0})   \end{equation*}
and so the stated energy inequality follows assuming that $E(\boldsymbol A_h^{N}) + \mathcal C(\boldsymbol A_h^{N}) \geq 0$ for every $N$. However, showing that
\begin{equation*}  \left|2\alpha(p_h^{n+1},\nabla\cdot(\boldsymbol\eta_h^{n+1} - \boldsymbol\eta_h^{n}))_{\Omega_b}\right| \leq 2\alpha||p_h^{n+1}||_{\Omega_b}||\nabla\cdot(\boldsymbol\eta_h^{n+1}-\boldsymbol\eta_h^{n})||_{\Omega_b},  \end{equation*} 
 \begin{equation*}  \left|2H\alpha^p(\overline{(z + H/2)q_h^{n+1}},\Lambda_h^{n+1} - \Lambda_h^{n})_{\Gamma_+}\right| \leq \frac{2\alpha H^{3/2}}{\sqrt{12}}||q_h^{n+1}||_{\Omega_p}||\Lambda_h^{n+1} - \Lambda_h^{n}||_{\Gamma_+}  \end{equation*} 
follows an identical strategy to the one used above and gives $E(\boldsymbol A_h^{N}) + \mathcal C(\boldsymbol A_h^{N}) \geq 0$ under the exact same parameter conditions specified in the statement of the theorem.
\end{proof}

\section{Numerical Simulations} \label{Simulations}

In this section we present numerical examples demonstrating the performance of the scheme in different two dimensional settings. We first test the algorithm on a benchmark problem where both $\Omega_b$ and $\Omega_f$ are unit squares in order to investigate convergence properties and the effect of the plate thickness $H$ in the dynamics of the fluid-structure model. We then apply the scheme to a physiologically relevant FPSI setting related to blood flow through a vessel.

We discretize in space by using piecewise linear $\mathbb P_1$ elements everywhere except in the fluid problem where we instead use Taylor-Hood ($\mathbb P_2 -\mathbb P_1$) elements. For errors, if $\chi$ is a variable of interest over a domain $\mathcal O$ which is numerically approximated by $\hat \chi$, we consider the relative error $e_\chi = \frac{||\chi - \hat\chi||_{\mathcal O}}{||\chi||_{\mathcal O}}$. The scheme was implemented using the FEniCS finite element library in Python (see \cite{fenics, fenicsbook}). We implement the averaging operator $\overline{f} = \frac{1}{H}\int_{-H/2}^{H/2}f \ dz$ by using Simpson's rule on a structured mesh; however, more sophisticated implementations may be necessary in the case of more complicated meshes or domains. For the implementation of the extension operator $\tilde f$ where $\left.\tilde f\right|_{\Gamma_-} = f$ and $\nabla_{\boldsymbol n^p}\tilde f = 0$, we solve the Laplace equation $\Delta_{\boldsymbol n^p}\tilde f = 0$ with the Dirichlet boundary condition $\left.\tilde f\right|_{\Gamma_-} = f$ on $\Gamma_-$ and the Neumann boundary condition $\left.\nabla_{\boldsymbol n^p}\tilde f\right|_{\Gamma_+}  = 0$ on $\Gamma_+$.

\subsection{Manufactured Solution}

For the first test case, we consider a two - dimensional manufactured solution to the coupled problem that satisfies the complete set of coupling conditions outlined previously (apart from balance of forces \textbf{(D1)} which includes a forcing term). We construct this solution on the computational domain $\Omega = [0,1]\times[-2,1]$ so that each of the full dimensional subdomains $\Omega_b  = [0,1]\times[0,1]$, $\Omega_p  = [0,1]\times[-1,0]$, and $\Omega_f  = [0,1]\times[-2,-1]$ are unit squares coupled on the one-dimensional boundaries $\Gamma_+ = [0,1]\times\{0\}$ and $\Gamma_- = [0,1]\times\{-1\}$. Due to the primal mixed formulation of the plate structure equation, it is also required that the manufactured solution variable $w = \left.\boldsymbol\eta\right|_{\Gamma_+}$ satisfies the homogenous Neumann condition $\partial_x w(0) =\partial_x w(1) = 0$.  Solution functions that satisfy all the specified boundary conditions are given by

\begin{equation*}  \begin{cases}
p  = 2\pi y\cos(2\pi x)\cos(y)(t + 1) & \text{ on } \Omega_b,  \\
q = \cos(2\pi x)\sin(2\pi y)(t + 1) & \text{ on } \Omega_m, \\
\boldsymbol \eta  = \left(\begin{matrix}\sin(2\pi x)\sin(2\pi y)(t+1) \\ 2\pi \cos(2\pi x)\cos(2\pi y)\left(\frac{t^2}{2}+t+1\right) \end{matrix}\right) & \text{ on } \Omega_b,   \\
\boldsymbol u = \left(\begin{matrix}\cos(x)\exp(-(y+1))(t+1) \\ \sin(x)\left(1-\exp(-(y+1))\right)(t+1)\end{matrix}\right) & \text{ on } \Omega_f, \\
\pi  = 2\sin(x)\cos(2\pi y)(t + 1) & \text{ on } \Omega_f, \\
\end{cases} \end{equation*} 
where the remaining variables $\boldsymbol u_b, u_p, \boldsymbol\xi, w, v$, and $\Lambda$ are uniquely determined from these functions and are all nonzero. Forcing terms $\boldsymbol F_b, G_b, F_p, G_p, \boldsymbol F_f$ are also calculated from the given solution functions using unit values for all constants except $\gamma, \gamma^p,$ and $\gamma_{pen}$, which are set to zero. Exact Dirichlet conditions are imposed along the boundary wherever appropriate, and exact initial conditions are used for $\boldsymbol\eta, \boldsymbol\xi, p, q$, and $\boldsymbol u$.

\begin{figure}[t]
\begin{center}
\includegraphics[scale=.13]{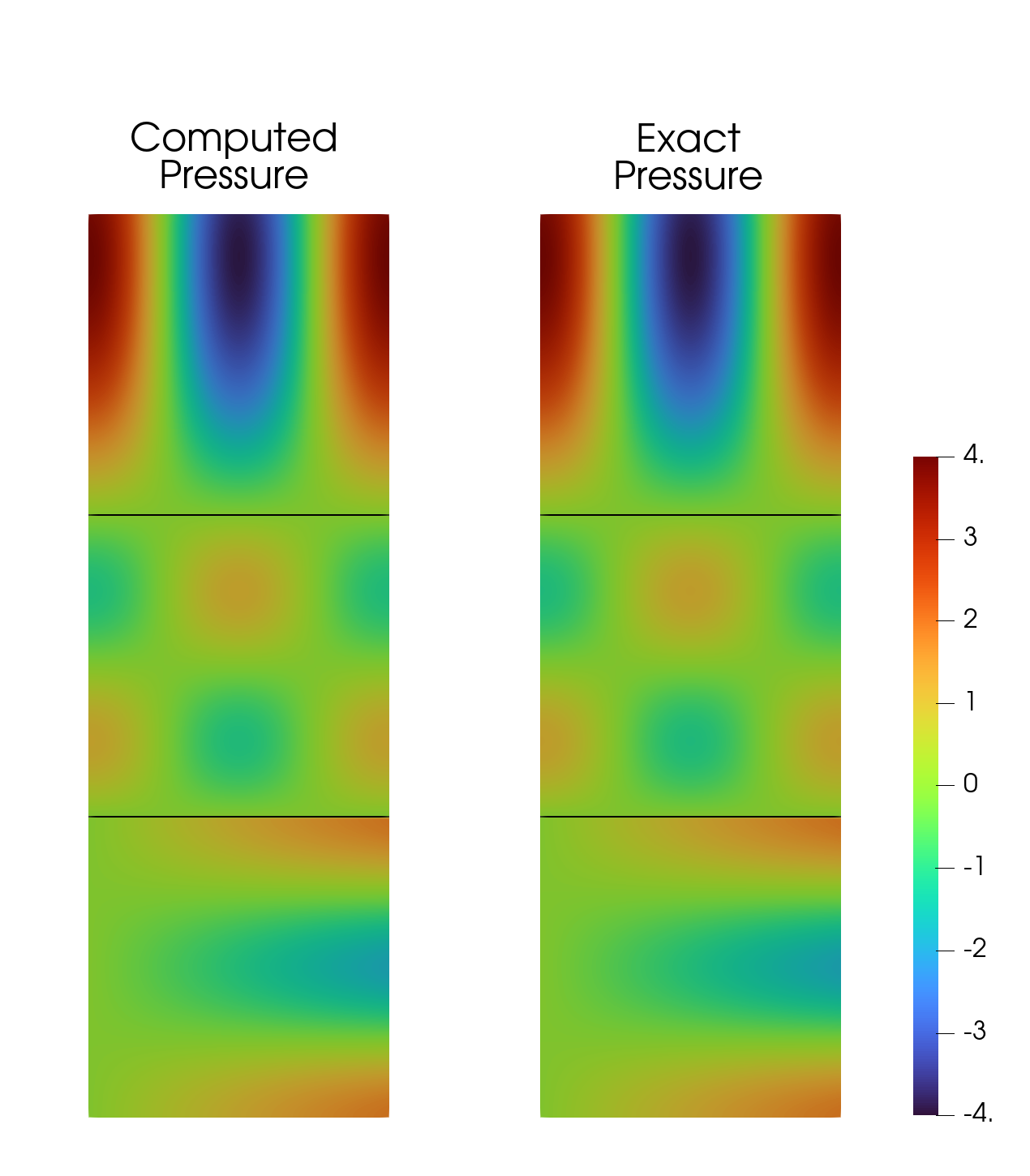}
\includegraphics[scale=.13]{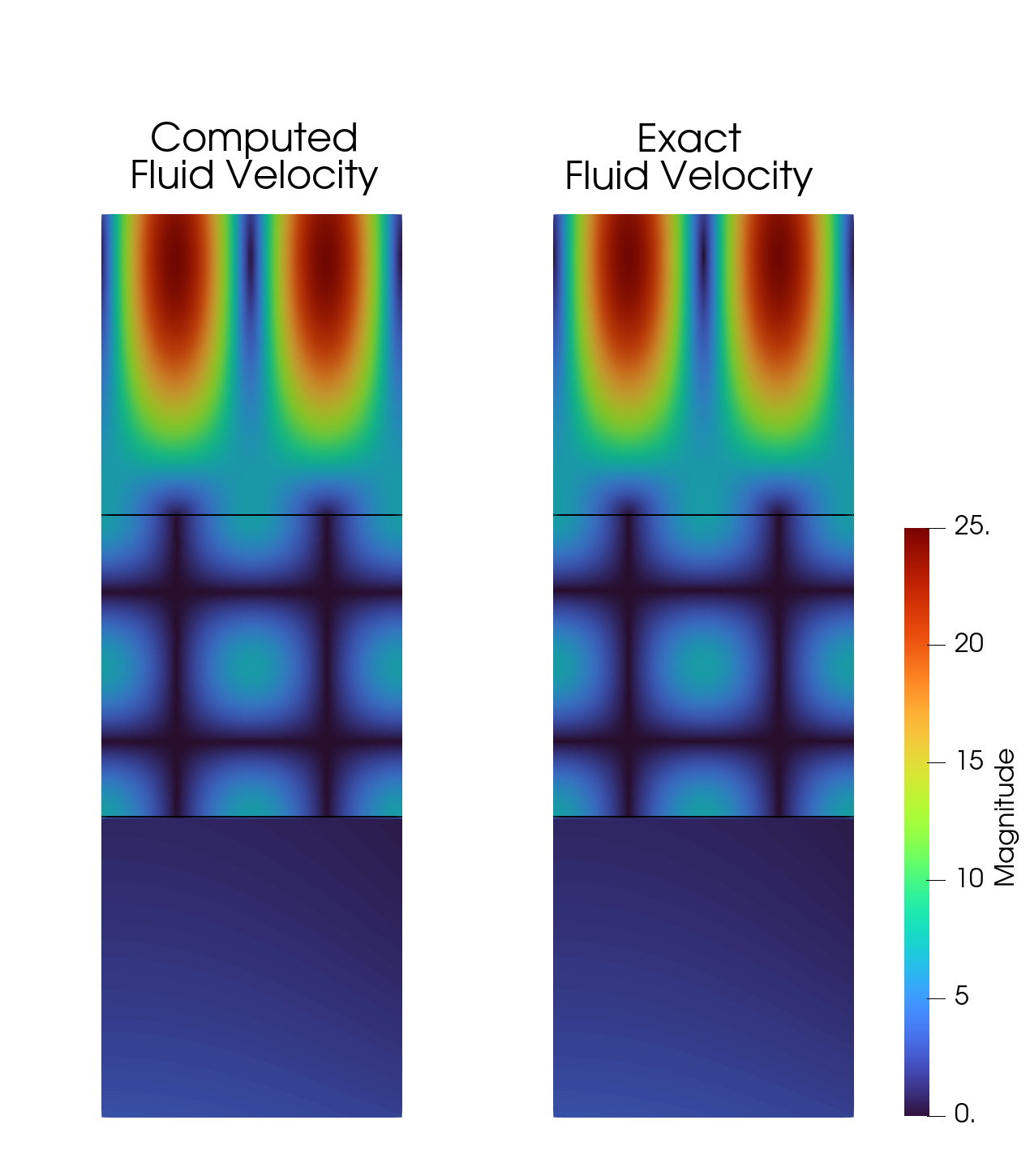}
\includegraphics[scale=.12]{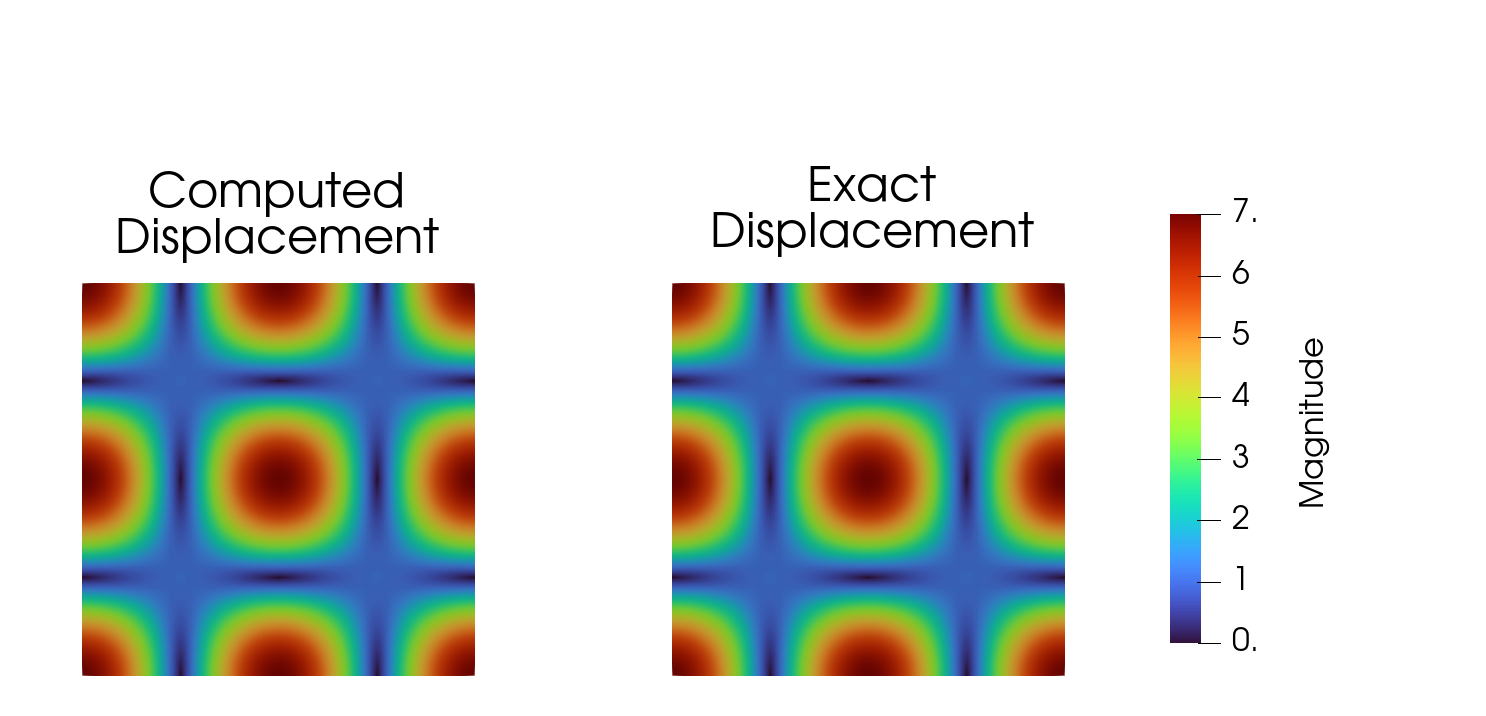}
\end{center}
\caption{Visualization of computed and exact pressures (top left two panels), computed and exact fluid velocities (top right two panels), and computed and exact structure displacements in the thick Biot domain (bottom two panels). Black lines indicate the interfaces $\Gamma_+$ and $\Gamma_-$}\label{fig:manvis}
\end{figure}

A visualization (\cref{fig:manvis}) was first created by solving the problem with the calculated forcing terms until time $T = 0.1$ with a step size of $\Delta t = 0.001$ and a spatial mesh size of $h = 120^{-1}$ and then comparing to the exact solution (\cref{tab:errs}). 
\begin{table}[h]
    \centering
    \label{tab:errs}
	\vspace{2mm}
   \begin{tblr}{colspec = {c|c|c|c|c|c|c|c|c}, rowsep=3pt}
     $e_p$ & $e_q$ & $e_{\boldsymbol\eta}$ & $e_{\boldsymbol\xi}$ & $e_w$ & $e_v$ & $e_{\boldsymbol u}$ & $e_\pi$ \\ \hline
   \small $1.6e-3$ & \small $2.2e-2$ &\small $1.9e-3$ & \small$8.1e-3$ &\small $5.5e-3$ &\small $3.7e-2$  &\small $1.5e-4$ &\small $7.2e-3$
\end{tblr}
\caption{Relative error between exact and numerical solutions}
\end{table}

Spatial convergence was then tested by calculating the relative errors for the same time interval and step size but with spatial mesh sizes $h = 40^{-1}, 60^{-1}$, and $80^{-1}$ (\cref{fig:sconv}), while convergence in time was tested by solving until time $T = 1$ by using a fixed spatial mesh size of $h = 200^{-1}$ and step sizes $\Delta t = 0.01, 0.02,$ and $0.04$ (\cref{fig:tconv}). Second-order convergence in space and first-order convergence in time is observed for all physical variables.

To test long term stability, the solution was calculated until time $T = 10$ with a time step of $\Delta t = 0.1$ and a mesh size of $h = 120^{-1}$. Energy of the numerical solution and the exact solution are calculated at each time step and plotted over time (\cref{fig:longterm}). An excellent match is observed.

\begin{figure}[h]
\centering
\includegraphics[scale=.4]{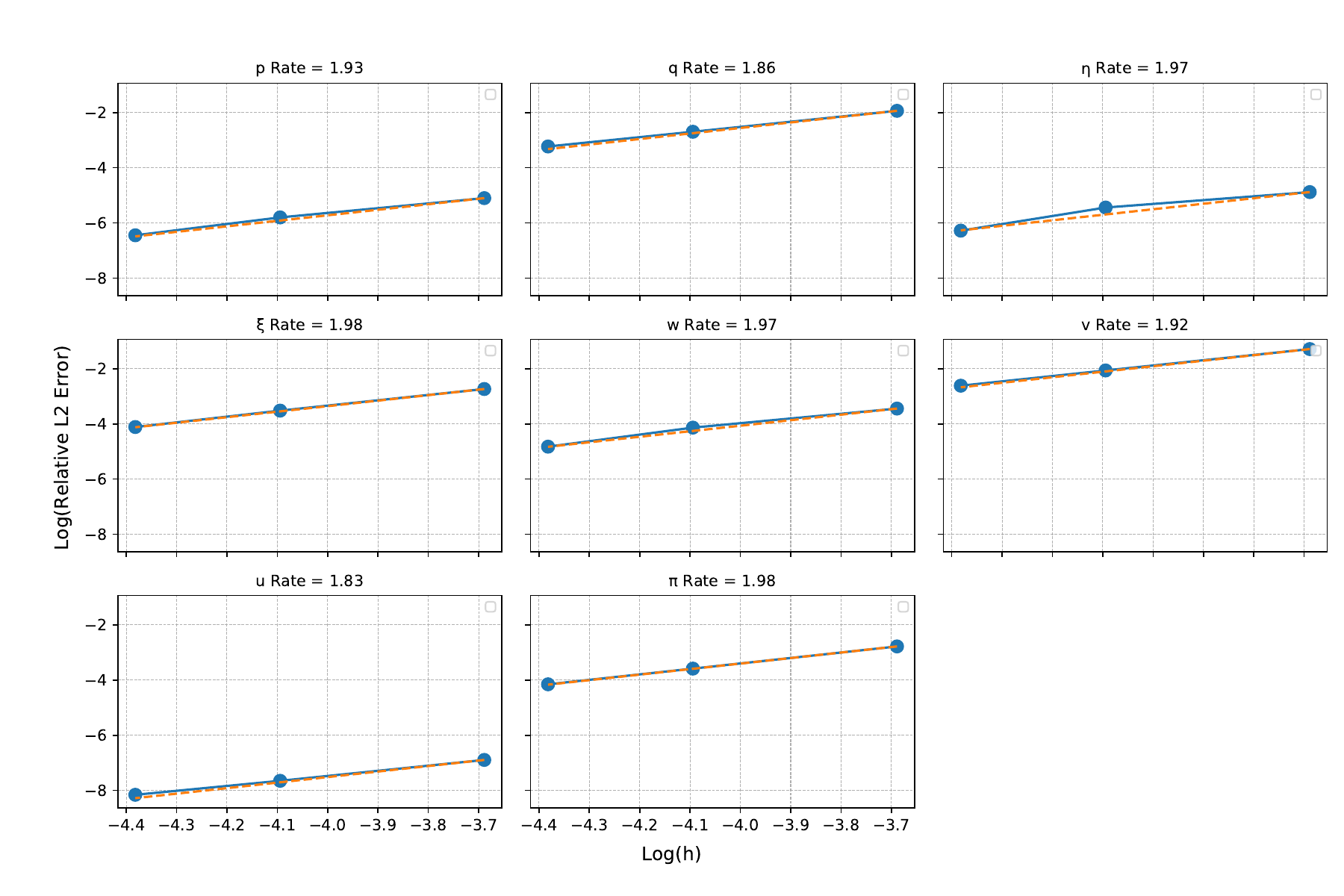}
\caption{Space convergence (blue line) plotted as a log-log plot against a line of slope 2 (dotted yellow line).}\label{fig:sconv}
\end{figure}

\begin{figure}[h]
\centering
\includegraphics[scale=.4]{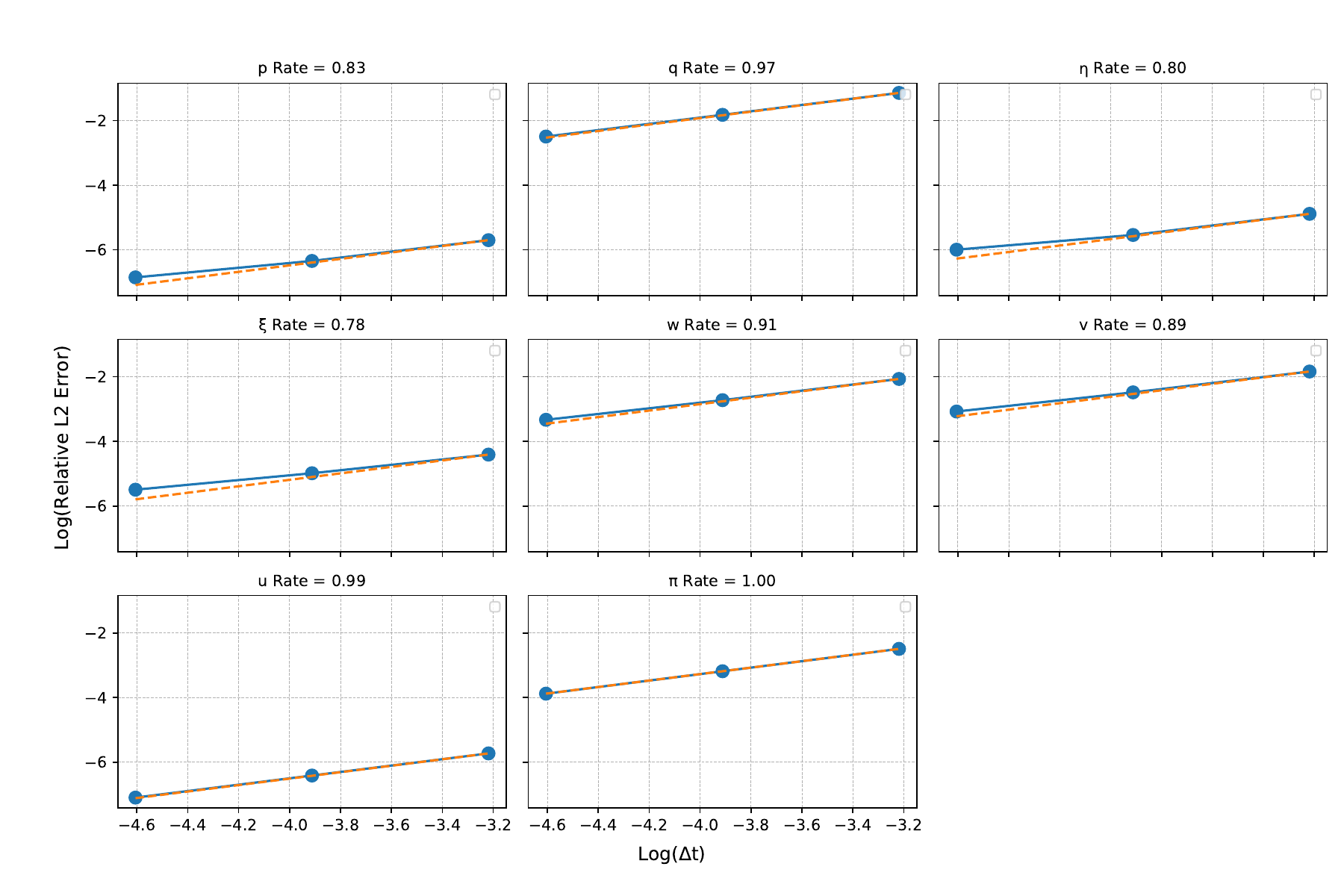}
\caption{Time convergence (blue line) plotted as a log-log plot against a line of slope 1 (dotted yellow line). }\label{fig:tconv}
\end{figure}

\begin{figure}[h]
\centering
\includegraphics[scale=.5]{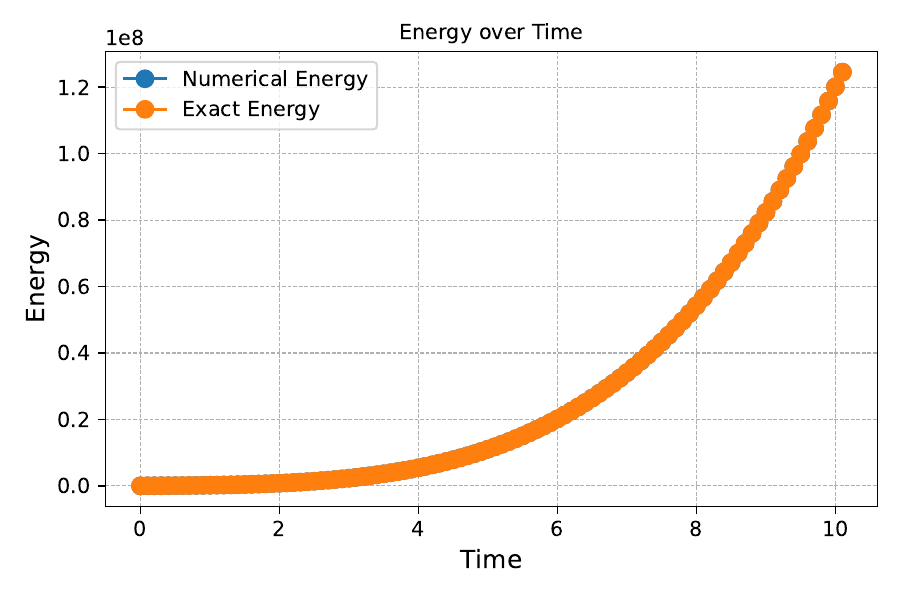}
\caption{Numerical energy (blue line) closely matches the exact energy of the manufactured solution (orange line) when plotted over $T = 10$ seconds. The two lines are superimposed.}\label{fig:longterm}
\end{figure}

\subsection{Vessel Flow Problem}

\tikzmath{\RS = .5; \LL =8; \HH = .1;\RF = 1.5;} 

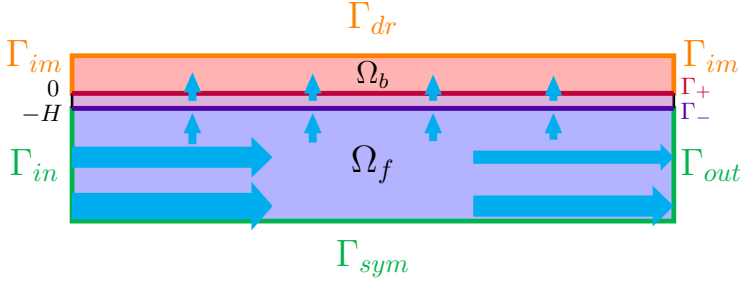
\begin{figure}[h]
\centering
\begin{tikzpicture}[fill opacity=1,text opacity=1]
\fill[line width=1.8pt,blue,fill opacity=.3] (0,-2*\HH-\RF) rectangle (\LL,-2*\HH) node[midway] {\Large\color{black}$\Omega_f$};
\draw[line width=1.8pt,green!70!blue!] (0,-2*\HH-\RF)  node[shift={(-0.5,\RF/2)}] {\Large $ \Gamma_{in}$} node[shift={(\LL/2,-0.5)}] {\Large $ \Gamma_{sym}$} rectangle (\LL,-2*\HH) node[shift={(0.5,-\RF/2)}] {\Large $ \Gamma_{out}$};
\fill[line width=1.8pt, red,fill opacity=.3] (0,0)  rectangle (\LL,\RS) node[midway] {\large\color{black}$\Omega_b$};
\draw[line width=1.8pt, orange] (0,0) node[shift={(-0.5,\RS)}] {\Large $ \Gamma_{im}$}  rectangle (\LL,\RS) node[shift={(0.5,0)}] {\Large $ \Gamma_{im}$} node[shift={(-\LL/2,0.5)}] {\Large $ \Gamma_{dr}$};
\draw[line width=1pt] (\LL,-2*\HH)  -- (\LL,0);
\draw[line width=1pt] (0,-2*\HH)  -- (0,0);
\draw[line width=1.8pt,red!80!blue!] (0,0)  node[shift={(-0.25,.07)}] {\small \color{black} $0$} -- (\LL,0) node[shift={(.3,.07)}] {\small $ \Gamma_+$};
\draw[line width=1.8pt,red!30!blue!] (0,-2*\HH)  node[shift={(-0.4,-.07)}] {\small\color{black} $-H$} -- (\LL,-2*\HH) node[shift={(.3,-.07)}] { \small$ \Gamma_-$};
\fill[line width=1.8pt,red!50!blue!,fill opacity=.3] (0,-2*\HH) rectangle (\LL,0);
\draw[-{Triangle[width=15pt,length=8pt,cyan]}, line width=8pt,cyan](0,-\HH-\RF/2) -- (\LL/3, -\HH-\RF/2);
\draw[-{Triangle[width=10pt,length=5pt,cyan]}, line width=5pt,cyan](2*\LL/3,-\HH-\RF/2) -- (\LL, -\HH-\RF/2);
\draw[-{Triangle[width=18pt,length=8pt,cyan]}, line width=10pt,cyan](0,-\HH-\RF+.1) -- (\LL/3, -\HH-\RF+.1);
\draw[-{Triangle[width=15pt,length=8pt,cyan]}, line width=8pt,cyan](2*\LL/3,-\HH-\RF+.1) -- (\LL, -\HH-\RF+.1);
\draw[-{Triangle[width=6pt,length=8pt,cyan]}, line width=3pt,cyan](\LL/5,-\HH-\RF/2.6) -- (\LL/5, -\HH-\RF/10);
\draw[-{Triangle[width=6pt,length=8pt,cyan]}, line width=3pt,cyan](2*\LL/5,-\HH-\RF/2.7) -- (2*\LL/5, -\HH-\RF/10);
\draw[-{Triangle[width=6pt,length=8pt,cyan]}, line width=3pt,cyan](3*\LL/5,-\HH-\RF/2.8) -- (3*\LL/5, -\HH-\RF/10);
\draw[-{Triangle[width=6pt,length=8pt,cyan]}, line width=3pt,cyan](4*\LL/5,-\HH-\RF/2.9) -- (4*\LL/5, -\HH-\RF/10);
\draw[-{Triangle[width=6pt,length=8pt,cyan]}, line width=3pt,cyan](\LL/5,-\HH) -- (\LL/5, \RS/1.7);
\draw[-{Triangle[width=6pt,length=8pt,cyan]}, line width=3pt,cyan](2*\LL/5,-\HH) -- (2*\LL/5, \RS/1.8);
\draw[-{Triangle[width=6pt,length=8pt,cyan]}, line width=3pt,cyan](3*\LL/5,-\HH) -- (3*\LL/5, \RS/1.9);
\draw[-{Triangle[width=6pt,length=8pt,cyan]}, line width=3pt,cyan](4*\LL/5,-\HH) -- (4*\LL/5, \RS/2);
\end{tikzpicture}
\caption{Computational domain for the pressure-driven flow with physical boundary conditions}
\label{fig:UnitFlowDomain}
\end{figure}

In this example we investigate the performance of our numerical method using physical boundary conditions on a physiologically motivated vessel domain. We will use this example to examine the solution as the thickness of the plate, $H$, goes to zero, and how it compares to the solution obtained using a (single layered) Stokes-Biot coupled problem. The solution to the Stokes-Biot problem will be calculated using a different solver, namely, we will use the monolithic scheme outlined in \cite{MassPen}. The computational domain is $\Omega = [0,L]\times[-H-R_f,R_b]$ so that $\Omega_b  = [0,L]\times[0,R_b]$ and $\Omega_f  = [0,L]\times[-H-R_f,-H]$ are rectangular domains representing the poroelastic media-adventitia layer of arterial walls and the artery lumen, respectively. These are coupled to the thin plate $\Omega_p  = [0,L]\times[-H,0]$ on the one-dimensional boundaries $\Gamma_+ = [0,L]\times\{0\}$ and $\Gamma_- = [0,L]\times\{-H\}$ representing a thin poroelastic intimal layer of arterial walls. In order to create a pressure-driven flow in the coupled system, we partition the external boundary $\Gamma_b$ of the thick Biot domain into a drained boundary $\Gamma_{dr}$ and an impermeable boundary $\Gamma_{im}$ and partition the external boundary $\Gamma_f$ of the fluid domain into an inflow boundary $\Gamma_{in}$, an outflow boundary $\Gamma_{out}$, and a symmetric boundary $\Gamma_{sym}$ (see \cref{fig:UnitFlowDomain}). The boundary conditions are given by
\begin{equation*} 
\begin{cases}
\begin{array}{lll}
\sigma_b(\boldsymbol\eta,p)\boldsymbol n = 0, & p = 0 & \text{ on } \Gamma_{dr}, \\
\boldsymbol\eta = 0, & \boldsymbol u_b\cdot\boldsymbol n = 0 & \text{ on } \Gamma_{im}, \\
\sigma_f(\boldsymbol u,\pi)\boldsymbol n = -P_{in}(t)\boldsymbol n & & \text{ on } \Gamma_{in}, \\
\sigma_f(\boldsymbol u,\pi)\boldsymbol n = 0 &  & \text{ on } \Gamma_{out}, \\
\sigma_f(\boldsymbol u,\pi)\boldsymbol n\cdot \boldsymbol \tau = 0, & \boldsymbol u\cdot\boldsymbol n = 0 & \text{ on } \Gamma_{sym}
\end{array}
\end{cases}
\end{equation*} 
where $P_{in}(t) = \frac{1}{2}P_{max}\left(1 - \cos\left(2\pi t/T_{pulse}\right)\right)1_{t < T_{pulse}}(t)$ is the inlet pressure given as a pulse function with maximum pressure of $P_{max}$ on the time interval $[0,T_{pulse}]$. For this simulation we set all the forcing terms to zero and choose physiological parameters for the constants as specified in \cref{tab:param}. In addition, we choose the plate elasticity coefficient $D = \frac{4\mu_b(\lambda_b + \mu_b)}{3(\lambda_b + 2\mu_b)}$ (dyne/cm$^2$) and use the pulse parameters $P_{max} = 13333$ dyne/cm$^2$, $T_{pulse} = 0.003$ s.

\begin{table}[h]
    \centering
    \begin{tabular}{|l l|l l|}
        \hline
        \footnotesize Parameters &\footnotesize Values &\footnotesize Parameters &\footnotesize Values \\
        \hline
        \footnotesize Fluid radius $R_f$ (cm) &\footnotesize 0.5 &\footnotesize Length $L$ (cm) &\footnotesize 5 \\
        \footnotesize PE wall thickness $R_b$ (cm) &\footnotesize 0.1  & \footnotesize Plate thickness $H$ (cm) &\footnotesize variable  \\
        \footnotesize Fluid density $\rho_f$ (g/cm$^3$) &\footnotesize 1  & \footnotesize PE wall density $\rho_b$ (g/cm$^3$) &\footnotesize 1.1  \\
        \footnotesize Membrane density $\rho_p$ (g/cm$^3$) &\footnotesize 1.1 &\footnotesize Fluid viscosity $\mu_f$ (g/cm s) &\footnotesize  0.035 \\
        \footnotesize Lamé coeff. $\mu_b$ (dyne/cm$^2$) &\footnotesize $5.58 \times 10^5$ &\footnotesize Lamé coeff. $\lambda_b$ (dyne/cm$^2$) &\footnotesize $1.7 \times 10^6$ \\
        \footnotesize Permeabilities $\kappa, \kappa^p$ (cm$^3$ s/g) & \footnotesize $1 \times 10^{-8}$ & \footnotesize Storage coeff. $c_0, c_0^p$ (cm$^2$/dyne) &\footnotesize $1 \times 10^{-3}$ \\
       \footnotesize Biot--Willis constants $\alpha, \alpha^p$ & \footnotesize 1 &\footnotesize Spring coeff. $\gamma, \gamma^p$ (dyne/cm$^4$) &\footnotesize $4 \times 10^6$ \\
\footnotesize Slip coeff. $\beta$ & \footnotesize 1 &\footnotesize Penalty coeff. $\gamma_{pen}$ &\footnotesize $1.2 \times 10^3$ \\
        \hline
    \end{tabular}
\caption{Geometry, fluid, and structure parameters used in vessel problem}
\label{tab:param}
\end{table}

A triangulation was created based on four meshing parameters; the horizontal width $h_L = L/300$, the vertical width $h_f = R_f/25$ in the fluid domain, the vertical width $h_b = R_b/4$ in the thick Biot domain, and the vertical width $h_p = H/3$ in the thin Biot domain. Note that the resulting triangles therefore have smaller minimum angles in the Biot domains and that $h_L$ is the relevant mesh scale for use in the penalty term (which is an integral over the horizontal lines $\Gamma_{\pm}$). The step size was set to $\Delta t = 0.0005$ s and the simulation was ran until time $T = 0.014$ seconds. In order to validate the solver, we also simulate the corresponding Stokes-Biot problem without the plate on the domain $[0,L]\times[-R_f,R_b]$ with the exact same boundary conditions using the dual-mixed formulation monolithic scheme outlined in \cite{MassPen}. We see that, as the thickness of the plate $H$ goes to $0$, the dynamics of the FPSI problem with a plate converges to the no-plate SB problem (\cref{fig:Hconv,fig:velFlow}). Furthermore, the presence of the plate with mass and poroelastic energy at the interface regularizes the dynamics as can be seen by comparing the number and magnitude of the oscillations in the displacement of the interface at time $t = 0.0075$ s, shown in \cref{fig:Hconv}, and the number of oscillations in the fluid pressure shown in \cref{fig:velFlow}.

\begin{figure}[h]
\centering
\includegraphics[scale=.28]{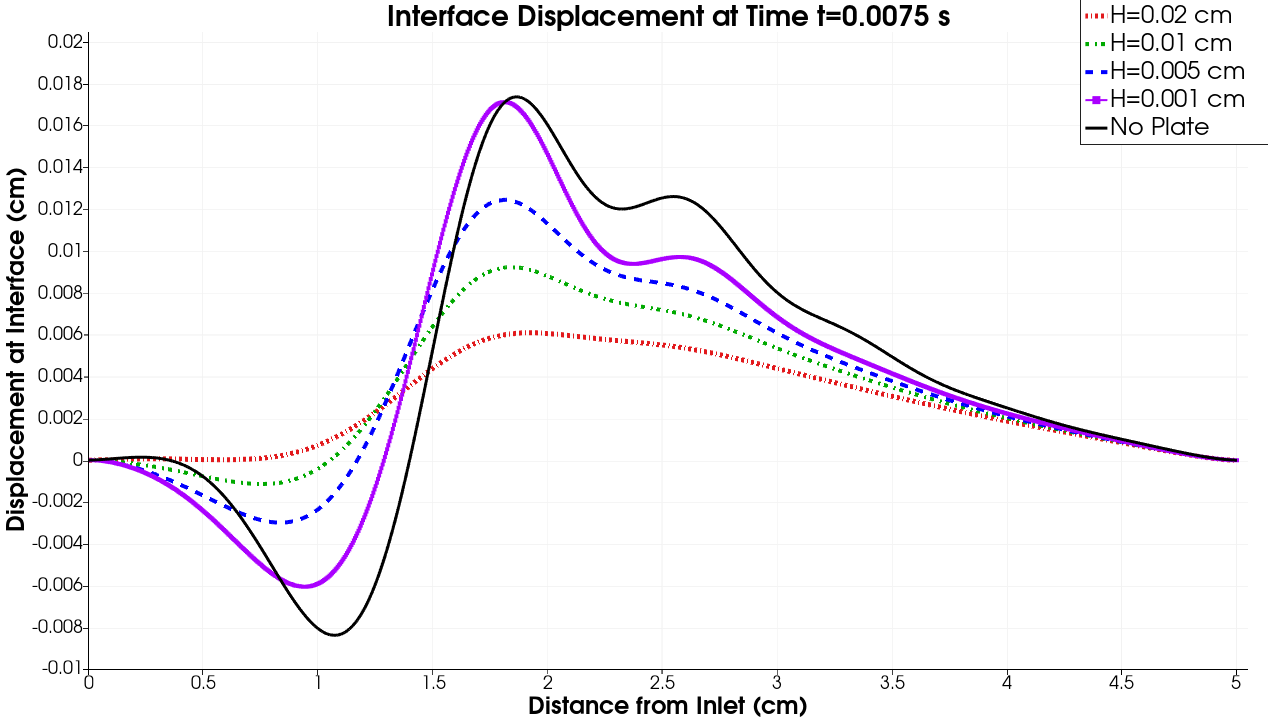}
\caption{Plate/interface displacement after pressure pulse for different values of $H$}\label{fig:Hconv}
\end{figure}

\begin{figure}[h]
\centering
\includegraphics[scale=.3]{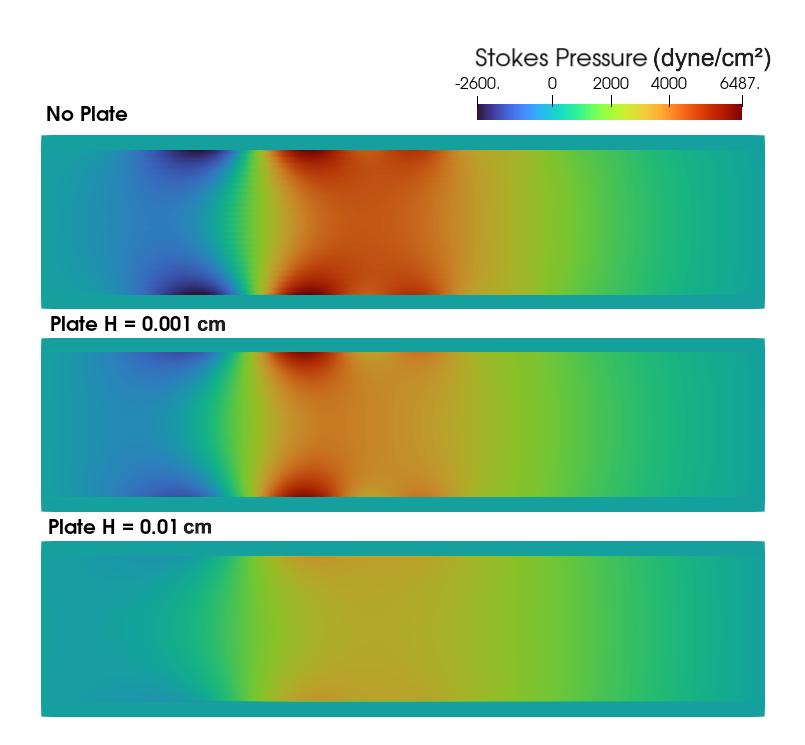}
\caption{Fluid pressure at time $t = .0075$ s for different values of $H$} \label{fig:velFlow}
\end{figure}

\section{Conclusions} 

In this manuscript, we introduced the first numerical scheme for solving a fluid–poroelastic structure interaction (FPSI) problem involving the Stokes flow of an incompressible, viscous fluid coupled with a Biot poroelastic medium across a thin interface that carries mass and poroelastic energy, modeled by the Biot plate equations. The proposed scheme is partitioned in nature, combining the backward Euler Stokes–Biot splitting with the fixed-strain Biot splitting approach. We established the conditional stability of the scheme and validated its accuracy using the method of manufactured solutions. Additionally, we demonstrated convergence of solutions 
as the plate thickness tends to zero, to the solution of the corresponding Stokes–Biot problem without the plate,  providing further validation of the method.

\bibliographystyle{siamplain}
\bibliography{PoroSplitPaper}
\end{document}